\let\oldbibliography\thebibliography
\renewcommand{\thebibliography}[1]{\oldbibliography{#1}\setlength{\itemsep}{0pt}}
\numberwithin{equation}{section}
\newtheorem{theorem}{Theorem}[section]
\newtheorem{proposition}[theorem]{Proposition}
\newtheorem{corollary}[theorem]{Corollary}
\newcommand{\R}{\mathbb R}
\begin{document}

\title{\textbf{Existence of solutions to a conformally invariant integral equation involving Poisson-type kernels}\bigskip}

\author{\medskip Xusheng Du, \quad Tianling Jin\footnote{T. Jin was partially supported by Hong Kong RGC grant GRF 16306918 and NSFC grant 12122120.}, \quad Hui Yang}

\date{\today}

\maketitle

\begin{abstract}

In this paper, we study existence of solutions to a conformally invariant integral equation involving Poisson-type kernels. Such integral equation has a stronger non-local feature and is not the dual of any PDE. We obtain the existence of solutions in the antipodal symmetry class.

\medskip

\noindent{\it Keywords}: Conformal invariance; Integral equations; Poisson-type kernels

\medskip

\noindent {\it MSC (2020)}: 45G05; 35B33

\end{abstract}

\section{Introduction}

In \cite{HWY08}, Hang-Wang-Yan established the following sharp integral inequality: 
\begin{equation}\label{In-01}
\| v \|_{ L^\frac{2 n}{n - 2} (B_1) } \leq n^{ - \frac{n - 2}{2 (n - 1)} } \omega_n^{ - \frac{n - 2}{2 n (n - 1)} } \| v \|_{ L^\frac{2 (n - 1)}{n - 2} (\partial B_1) }
\end{equation}
for every harmonic function $v$ on the unit ball $B_1 \subset \R^n$ ($n \geq 3$), where $\omega_n$ is the Euclidean volume of $B_1$. They also classified all the maximizers by showing that the equality holds if and only if $v = \pm 1$ up to a conformal transform on the unit sphere $\partial B_1$. This is actually a higher dimensional generalization of Carleman's inequality \cite{C}, which was used by Carleman to prove the classical isoperimetric inequality. Let $g_{\R^n}$ be the Euclidean metric on $\R^n$. Then for a positive harmonic function $v$ on $B_1$, the scalar curvature of $g = v^\frac{4}{n - 2} g_{\R^n}$ on $B_1$ is identically zero. Moreover, under the metric $g$, the volume of $B_1$ and the area of $\partial B_1$ are equal to $\int_{B_1} v^\frac{2 n}{n - 2} d \xi$ and $\int_{\partial B_1} v^\frac{2 (n - 1)}{n - 2} d s$, respectively. Hence, the inequality \eqref{In-01} can be considered as an isoperimetric inequality in the conformal class of $g_{\R^n}$ for which the scalar curvature vanishes. In \cite{HWY09}, Hang-Wang-Yan further obtained a generalization of \eqref{In-01} on a smooth compact Riemannian manifold of dimension $n \geq 3$ with non-empty boundary by introducing an isoperimetric ratio over the scalar-flat conformal class. It was conjectured there that unless the manifold is conformally diffeomorphic to the Euclidean ball, the supremum of the isoperimetric ratio over the scalar-flat conformal class is always strictly larger than that in the Euclidean ball, so that the maximizers would exist. This conjecture was confirmed in higher dimensions under certain geometric assumptions by Jin-Xiong \cite{JX18} and Chen-Jin-Ruan \cite{CJR19}, and also was confirmed for balls with a small hole by Gluck-Zhu \cite{GZ}.

Using the M{\" o}bius transformation in \eqref{Mob}, the equivalent form of \eqref{In-01} in the upper half-space is given by
\begin{equation}\label{Upp}
\| \mathcal{P} u \|_{ L^\frac{2 n}{n - 2} (\R^n_+) } \leq n^{ - \frac{n - 2}{2 (n - 1)} } \omega_n^{ - \frac{n - 2}{2 n (n - 1)} } \| u \|_{ L^\frac{2 (n - 1)}{n - 2} (\R^{n - 1}) },
\end{equation}
where $\R^{n - 1}$ is the boundary of $\R^n_+$ and $\mathcal{P} u$ is the Poisson integral of $u$ in the upper half-space. The maximizers are $u(y') = c (\lambda^2 + |y' - y'_0|^2)^{ - \frac{n - 2}{2} }$ for some constant $c$, positive constant $\lambda$, and $y'_0 \in \R^{n - 1}$. In \cite{Chen14}, Chen proved an analogous inequality for a one-parameter family $\{ \mathcal{P}_a \}_{2 - n < a < 1}$ of Poisson-type kernels in $\R^n_+$. More specifically, let the parameter $a$ satisfy $2 - n < a < 1$ with $n \geq 2$, and define the Poisson-type kernels
$$
P_a (y', x) = c_{n, a} \frac{ x_n^{1 - a} }{ (|x' - y'|^2 + x_n^2)^\frac{n - a}{2} } ~~~~~~ \textmd{for} ~ y' \in \R^{n - 1}, ~ x \in \R_+^n,
$$
where $x = (x', x_n) \in \R_+^n = \R^{n - 1} \times (0, + \infty)$ and $c_{n, a}$ is the positive normalization constant such that $\int_{ \R^{n - 1} } P_a (y', x) d y' = 1$. Consider the following Poisson-type integral
\begin{equation}\label{eq:poissontypeR}
(\mathcal{P}_a u) (x) = \int_{ \R^{n - 1} } P_a (y', x) u(y') d y' ~~~~~~ \textmd{for} ~ x \in \R_+^n.
\end{equation}
It becomes the Poisson integral when $a = 0$ (i.e., $\mathcal{P}_0 = \mathcal{P}$). Chen \cite{Chen14} proved the following sharp integral inequality
\begin{equation}\label{Pa}
\| \mathcal{P}_a u \|_{ L^\frac{2 n}{n + a - 2} (\R^n_+) } \leq \mathcal{S}_{n, a} \| u \|_{ L^\frac{2 (n - 1)}{n + a - 2} (\R^{n - 1}) },
\end{equation}
where the sharp constant $\mathcal{S}_{n, a}$ depends only on $n$ and $a$. This Poisson-type integral \eqref{eq:poissontypeR} was used earlier by Caffarelli-Silvestre \cite{CS07} to localize the fractional Laplacian operator. Indeed, when $-1<a<1$, then it was shown in \cite{CS07} that 
\begin{equation*}
\begin{split}
\mbox{div} [x_n^{a}\nabla (\mathcal{P}_a u)]&=0\quad\mbox{in }\R^n_+,\\
-\lim_{x_n\to 0^+} x_n^{a}\partial_{x_n} (\mathcal{P}_a u) &= C_{n,a} (-\Delta)^{\frac{1-a}{2}} u\quad\mbox{on }\R^{n-1},
\end{split}
\end{equation*}
where $C_{n,a}$ is a positive constant and $(-\Delta)^{\frac{1-a}{2}} $ is the fractional Laplacian operator. See also Yang \cite{Yang} for higher order extensions for the fractional Laplacian. We refer to Dou-Guo-Zhu \cite{DGZ}, Gluck \cite{G} and the references therein for other related integral inequalities.

One can  define the Poisson-type integral $\widetilde{ \mathcal{P} }_a v$ on $B_1$ as the pull back operator of $\mathcal{P}_a$ via the M{\" o}bius transformation:
\begin{equation}\label{Mob}
F: \  \overline{\R_+^n} \to \overline{B}_1, ~~~~~~ x \mapsto \frac{2 (x + e_n)}{|x + e_n|^2} - e_n,
\end{equation} 
where $e_n = (0, \dots, 0, 1) \in \R^n$. Then for $y'\in \R^{n-1}$,
$$
F(y', 0) = \bigg( \frac{2 y'}{1 + |y'|^2}, \frac{1 - |y'|^2 }{1 + |y'|^2} \bigg) \in \partial B_1
$$
is the inverse of the stereographic projection. For $v \in L^\frac{2 (n - 1)}{n + a - 2} (\partial B_1)$, let
$$
u(y') = \bigg( \frac{ \sqrt{2} }{|(y',0) + e_n|} \bigg)^{n + a - 2} v( F(y', 0) ), 
$$
and define
$$
 (\widetilde{ \mathcal{P} }_a v) ( F(x) )=\bigg( \frac{|x + e_n|}{ \sqrt{2} } \bigg)^{n + a - 2} (\mathcal{P}_a u) (x).
$$
That is, 
$$
(\widetilde{ \mathcal{P} }_a v) \circ F(x) = |x + e_n|^{n + a - 2} \mathcal{P}_a \bigg( \frac{ v \circ F(y', 0) }{ |(y',0) + e_n|^{n + a - 2} } \bigg) ~~~~~~ \textmd{for} ~ v \in L^\frac{2 (n - 1)}{n - 2 + a} (\partial B_1).
$$
By a direct calculation, for $v \in L^\frac{2 (n - 1)}{n + a - 2} (\partial B_1)$, the Poisson-type integral $\widetilde{\mathcal{P}}_a v$ on the unit ball has the following explicit form:
\begin{equation}\label{eq:poissontypeS}
(\widetilde{ \mathcal{P} }_a v) (\xi) = \int_{\partial B_1} \widetilde P_a (\eta, \xi) v(\eta) d s_\eta ~~~~~~ \textmd{for} ~ \xi \in B_1,
\end{equation}
where
$$
\widetilde P_a (\eta, \xi) = 2^{a - 1} c_{n, a} \frac{ (1 - |\xi|^2)^{1 - a} }{ |\xi - \eta|^{n - a} }.
$$
Then, it follows from \eqref{Pa} that we have the following sharp inequality
\begin{equation}\label{eq:sharp}
\| \widetilde{ \mathcal{P} }_a v \|_{ L^\frac{2 n}{n + a - 2} (B_1) } \leq \mathcal{S}_{n, a} \| v \|_{ L^\frac{2 (n - 1)}{n + a - 2} (\partial B_1) }.
\end{equation}

From now on, for simplicity, we will use the unified notation $\mathcal{P}_a v$ to denote either the Poisson-type integral \eqref{eq:poissontypeR} of $v$ on the upper half space or the Poisson-type integral \eqref{eq:poissontypeS} of $v$ on the unit ball, whenever there is no confusion. Inspired by Hang-Wang-Yan \cite{HWY09} on the proof of inequality \eqref{In-01}, for a positive function $K \in C^1 (\partial B_1)$ we consider the weighted isoperimetric ratio
$$
I(v, K) = \frac{ \int_{B_1} |\mathcal{P}_a v|^\frac{2 n}{n + a - 2} d \xi }{ \Big( \int_{\partial B_1} K |v|^\frac{2 (n - 1)}{n + a - 2} d s \Big)^\frac{n}{n - 1} } ~~~~~~ \textmd{for} ~ v \in L^\frac{2 (n - 1)}{n + a - 2} (\partial B_1).
$$
In this paper, motivated by the classical Nirenberg problem we would like to study existence of positive solutions to the Euler-Lagrange equation of the functional $I(v, K)$ for a given function $K > 0$. The Euler-Lagrange equation can be written as the following integral equation
\begin{equation}\label{eq:integralequation}
K(\eta) v(\eta)^\frac{n - a}{n + a - 2} = \int_{B_1} P_a (\eta, \xi) \left[(\mathcal{P}_a v) (\xi)\right]^\frac{n - a + 2}{n + a - 2} d \xi, ~~~~~~ v > 0 ~~~~~~ \textmd{on} ~ \partial B_1.
\end{equation}
This equation is critical and conformally invariant. Moreover, it is not always solvable by a Kazdan-Warner type obstruction (see Lemma 3.1 of Hang-Wang-Yan \cite{HWY09} for $a=0$). In this paper, we show the following existence result.
\begin{theorem}\label{thm:existence} Suppose that $n \geq 2$ and $2 - n < a < 1$. Let $K \in C^1 (\partial B_1)$ be a positive function satisfying $K(\xi) = K(- \xi)$ for every $\xi\in\partial B_1$. If
\begin{equation}\label{eq:maxmin}
\frac{ \max_{\partial B_1} K }{ \min_{\partial B_1} K } < 2^\frac{1}{n},
\end{equation}
then equation \eqref{eq:integralequation} has at least one positive H\"older continuous solution.
\end{theorem}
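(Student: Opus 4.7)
The plan is to obtain solutions of \eqref{eq:integralequation} as rescaled maximizers of $I(\cdot,K)$ restricted to the nonnegative antipodal-symmetric cone
\[
\mathcal A:=\bigl\{v\in L^q(\partial B_1)\,:\,v\ge 0,\ v(-\xi)=v(\xi)\ \text{a.e.}\bigr\},\qquad q:=\tfrac{2(n-1)}{n+a-2},\ \ p:=\tfrac{2n}{n+a-2}.
\]
The corresponding Euler--Lagrange equation is
\[
K\,v^{q-1}=\lambda\int_{B_1}P_a(\cdot,\xi)\,(\mathcal P_a v)^{p-1}\,d\xi,
\]
and because its two sides scale as $t^{q-1}$ and $t^{p-1}$ under $v\mapsto tv$ with $q-1\ne p-1$, a single dilation $v\mapsto\lambda^{-(n+a-2)/2}v$ converts any positive solution into one satisfying \eqref{eq:integralequation}. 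Restricting to $\mathcal A$ is essential: the functional $I$ is invariant under the full conformal group of $\partial B_1$, while $\mathcal A$ is only invariant under its antipodal-preserving subgroup, so the conformal group can no longer produce arbitrarily singular blow-up profiles.

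The threshold ruling out blow-up for maximizing sequences in $\mathcal A$ comes from the sharp inequality \eqref{eq:sharp}. By antipodal symmetry any concentration point $\xi_0\in\partial B_1$ is paired with $-\xi_0$ and carries equal mass; writing $v_k\approx v_k^{(1)}+v_k^{(2)}$ as two bubbles with essentially disjoint supports on $\partial B_1$ and essentially disjoint images in $B_1$ under $\mathcal P_a$, and using $K(\xi_0)=K(-\xi_0)$ together with the identity $p=q\,n/(n-1)$, one obtains
\[
\limsup_{k\to\infty}I(v_k,K)\le\frac{2\,\mathcal S_{n,a}^{\,p}}{\bigl(2K(\xi_0)\bigr)^{n/(n-1)}}=\frac{\mathcal S_{n,a}^{\,p}}{2^{1/(n-1)}K(\xi_0)^{n/(n-1)}}\le\Lambda:=\frac{\mathcal S_{n,a}^{\,p}}{2^{1/(n-1)}(\min_{\partial B_1}K)^{n/(n-1)}}.
\]
A Br\'ezis--Lieb type / profile-decomposition argument adapted to the non-local operator $\mathcal P_a$ then yields: if $\sup_{\mathcal A}I(\cdot,K)>\Lambda$, any maximizing sequence converges strongly in $L^q(\partial B_1)$, up to a subsequence, to a maximizer $v_\ast\in\mathcal A$.

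The strict inequality $\sup_{\mathcal A}I(\cdot,K)>\Lambda$ follows by testing with the constant $v\equiv 1$. Pulling back through \eqref{Mob}, $v\equiv 1$ corresponds on $\R^{n-1}$ to a positive multiple of the extremizer $(1+|y'|^2)^{-(n+a-2)/2}$ of \eqref{Pa}, so equality holds in \eqref{eq:sharp} at $v\equiv 1$, giving
\[
I(1,K)=\frac{\mathcal S_{n,a}^{\,p}\,|\partial B_1|^{n/(n-1)}}{\bigl(\int_{\partial B_1}K\,ds\bigr)^{n/(n-1)}}=\frac{\mathcal S_{n,a}^{\,p}}{\bar K^{\,n/(n-1)}},\qquad \bar K:=\frac{1}{|\partial B_1|}\int_{\partial B_1}K\,ds.
\]
Since $\bar K\le\max K$, hypothesis \eqref{eq:maxmin} gives $\bar K<2^{1/n}\min K$, equivalently $\bar K^{n/(n-1)}<2^{1/(n-1)}(\min K)^{n/(n-1)}$, which is $I(1,K)>\Lambda$. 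The resulting maximizer $v_\ast$ is nonnegative and not identically zero, so $\mathcal P_a v_\ast>0$ throughout $B_1$ and the right-hand side of \eqref{eq:integralequation} is pointwise strictly positive on $\partial B_1$, forcing $v_\ast>0$. H\"older continuity then follows from a standard bootstrap on \eqref{eq:integralequation}: one first upgrades $v_\ast\in L^q(\partial B_1)$ to $L^\infty(\partial B_1)$ using integrability of $P_a(\eta,\cdot)$, and then obtains $C^\alpha$ regularity directly from the integral expression.

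The main technical obstacle is the concentration-compactness step for the non-local operator $\mathcal P_a$ on $\mathcal A$: one must verify that bubble concentrations are the only obstruction to compactness and that the energy bound displayed above is sharp in this antipodal setting. A possible shortcut is a subcritical approximation: replace the critical exponents $q-1,\,p-1$ by $q-1-\varepsilon,\,p-1-\varepsilon$, solve the resulting subcritical problem in $\mathcal A$ by standard variational methods, and send $\varepsilon\to 0$; the same comparison with $v\equiv 1$ then excludes blow-up of the approximating solutions.
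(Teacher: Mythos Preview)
Your overall architecture is right and matches the paper: restrict to the antipodal class, identify the critical threshold $\Lambda=\mathcal S_{n,a}^{\,p}\big/\big(2^{1/(n-1)}(\min K)^{n/(n-1)}\big)$, verify the strict inequality by testing with $v\equiv 1$ (which is indeed an extremizer of \eqref{eq:sharp}), and then rescale the maximizer to remove the Lagrange multiplier. The paper does exactly this.

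Where you and the paper diverge is in how the compactness is obtained. You present a direct Br\'ezis--Lieb/profile-decomposition argument as the main route and mention subcritical approximation only as a ``possible shortcut.'' The paper does the opposite: it \emph{only} runs the subcritical route, and this route is not as soft as you suggest. Concretely, the paper takes $p_i\searrow q$ (perturbing only the boundary exponent, not both exponents as you wrote), obtains subcritical maximizers $v_i$ by compact embedding, and then rules out $\max v_i\to\infty$ via a dedicated blow-up theorem (the paper's Theorem~\ref{thm:blowup}). That theorem is the real work: one rescales around the maximum point, splits $\mathcal P_a\phi_i$ into a local and a tail part, proves uniform H\"older bounds, passes to a limit, and then invokes a Liouville theorem for $\mathrm{div}(x_n^a\nabla\Phi)=0$ (Theorem~\ref{Wang-Zhu}) together with an argument in the style of \cite{JLX17} to show that the limit solves the \emph{clean} equation on $\R^{n-1}$ with constant $K(\bar\eta)$ and no residual tail. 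Only then does the antipodal ``factor of $2$'' comparison yield $\lambda\le\Lambda$ and the contradiction.

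Your heuristic two-bubble bound (``essentially disjoint images in $B_1$ under $\mathcal P_a$'') is the right intuition but is not a proof: for this operator there is no readily available profile decomposition, and the non-locality means the two antipodal bubbles interact inside $B_1$. You yourself flag this as the main obstacle. So either you must supply a genuine concentration-compactness argument for $\mathcal P_a$, or follow the paper and carry out the subcritical/blow-up analysis in detail. Also, two small slips: with your placement of $\lambda$ on the right-hand side of the Euler--Lagrange equation the correct rescaling is $v\mapsto\lambda^{(n+a-2)/2}v$, not the negative power; and the H\"older regularity is not quite ``standard bootstrap''---at the critical exponent the paper needs a separate smallness/absorption argument (Proposition~\ref{prop:regularityone}) before bootstrapping.
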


The existence of solutions to the Nirenberg problem for prescribed antipodal symmetric functions was established by Moser \cite{Moser} in dimension two, and by Escobar-Schoen \cite{ES} in higher dimensions under a  flatness assumption near the prescribed function's maximum point. For the generalized Nirenberg problem for $Q$-curvature and fractional $Q$-curvatures, similar results have been obtained by Robert \cite{Robert} and Jin-Li-Xiong \cite{JLX15, JLX17}, respectively. In the case $a = 0$, the existence of solutions to \eqref{eq:integralequation} with antipodal symmetric functions $K$ has been proved by Xiong \cite{X}  under a global flatness condition at $K$'s minimum point. Our condition is slightly weaker, although it is still a (not arbitrarily small, though) perturbation result. We do not know whether a local flatness condition would be sufficient. The difficulty is that the antipodal symmetry does not provide a desirable positive mass in our setting, which is different from the Nirenberg problem or the Yamabe problem.  Note that equation \eqref{eq:integralequation} has a stronger non-local feature and is not the dual of any PDE. This, as already shown in \cite{X},  will lead to some differences from the classical Nirenberg problem \cite{JLX17}.

This paper is organized as follows. In Section \ref{sec:Preliminaries}, we collect some elementary properties of the Poisson extension as a preparation. In Section \ref{sec:blowup}, we show the blow up procedure for the non-linear integral equation \eqref{eq:integralequation}. In Section \ref{sec:variational}, we use a variational method to prove Theorem \ref{thm:existence}.

\section{Preliminaries}\label{sec:Preliminaries}

From now on, we denote $x = (x', x_n) \in \R^{n - 1} \times \R$ as the point in $\R^n$, $B_R (x)$ as the open ball of $\R^n$ with radius $R$ and center $x$, $B_R^+(x)$ as $B_R(x) \cap \mathbb{R}_+^n$, and $B'_R (x')$ as the open ball in $\R^{n - 1}$ with radius $R$ and center $x'$. For simplicity, we also write $B_R(0)$, $B_R^+(0)$ and $B'_R (0)$ as $B_R$, $B_R^+$ and $B'_R$, respectively.

Here we list several properties of the Poisson-type extension operator $\mathcal{P}_a$.

\begin{proposition}\label{lem:compactembeddingR} Suppose that $n \geq 2$ and $2 - n < a < 1$. If $1 \leq p < \infty$ and $1 \leq q < \frac{n p}{n - 1}$, then the operator
$$
\mathcal{P}_a : L^p (\R^{n - 1}) \to L_{loc}^q (\overline{\R_+^n})
$$
is compact.
\end{proposition}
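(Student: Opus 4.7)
Let $\{u_k\}$ be bounded in $L^p(\R^{n-1})$ and $K\subset\overline{\R_+^n}$ a compact set; after enlargement we may assume $K\subset\overline{B_R^+}$ for some $R>0$, and since $L^{q'}(K)\hookrightarrow L^q(K)$ continuously for $q\le q'$ on the bounded set $K$, we may also assume $q\ge p$. The plan is to decompose $K=K_\delta\cup(K\setminus K_\delta)$ with $K_\delta:=K\cap\{x_n\ge\delta\}$, show that on $K_\delta$ the family $\{\mathcal{P}_a u_k\}$ is equicontinuous and uniformly bounded (so Arzel\`a--Ascoli applies), show that on $K\setminus K_\delta$ the $L^q$ norm of $\mathcal{P}_a u_k$ is $O(\delta^\beta)$ uniformly in $k$ for some $\beta>0$, and then combine the two via diagonalization and an $\varepsilon/3$-argument. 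The subcritical assumption $q<np/(n-1)$ will enter exactly at the point where we extract the positive exponent $\beta$.

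\textbf{Smallness near the boundary via slice-wise interpolation.} Applied in the variable $x'$, Young's convolution inequality (using the normalization $\int_{\R^{n-1}}P_a(y',x)\,dy'=1$) gives $\|\mathcal{P}_a u(\cdot,x_n)\|_{L^p(\R^{n-1})}\le\|u\|_{L^p}$. A direct scaling computation yields $\|P_a(\cdot,0,x_n)\|_{L^{p'}(\R^{n-1})}=Cx_n^{-(n-1)/p}$, the defining integral converging because the inequality $(n-a)p'>n-1$ holds automatically for $a<1$ and $1\le p<\infty$; H\"older then gives $\|\mathcal{P}_a u(\cdot,x_n)\|_{L^\infty(\R^{n-1})}\le Cx_n^{-(n-1)/p}\|u\|_{L^p}$. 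Interpolating these two endpoint bounds gives
$$
\|\mathcal{P}_a u(\cdot,x_n)\|_{L^q(\R^{n-1})} \;\le\; C\,x_n^{(n-1)(1/q-1/p)}\|u\|_{L^p}, \qquad p\le q\le\infty.
$$
Raising to the $q$-th power and integrating in $x_n\in(0,\delta)$ produces
$$
\|\mathcal{P}_a u\|_{L^q(\R^{n-1}\times(0,\delta))} \;\le\; C\|u\|_{L^p}\,\delta^{(n-q(n-1)/p)/q},
$$
whose exponent is strictly positive precisely when $q<np/(n-1)$. Hence this contribution tends to zero uniformly in $u$ with bounded $L^p$ norm as $\delta\to 0^+$.

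\textbf{Equicontinuity away from the boundary.} For fixed $\delta>0$ the uniform bound $|P_a(y',x)|\lesssim_\delta(1+|y'|)^{-(n-a)}$, valid for $x$ in any bounded subset of $\{x_n\ge\delta\}$, provides an $L^{p'}$-integrable dominating envelope. Since $P_a$ is continuous in $x$ for each $y'$, dominated convergence shows the map $x\mapsto P_a(\cdot,x)$ is continuous from $K_\delta$ into $L^{p'}(\R^{n-1})$, and compactness of $K_\delta$ upgrades this to uniform continuity. Combined with $\sup_{x\in K_\delta}\|P_a(\cdot,x)\|_{L^{p'}(\R^{n-1})}<\infty$, H\"older's inequality makes $\{\mathcal{P}_a u_k\}$ equicontinuous and uniformly bounded on $K_\delta$, so by Arzel\`a--Ascoli some subsequence converges uniformly on $K_\delta$.

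\textbf{Diagonalization and conclusion.} Applying the previous step successively to $\delta_m=1/m$ and diagonalizing yields one subsequence $\{\mathcal{P}_a u_{k_j}\}$ converging uniformly (and hence in $L^q$) on each $K_{1/m}$. Given $\varepsilon>0$, first fix $m$ large so that $\|\mathcal{P}_a u_{k_j}\|_{L^q(K\setminus K_{1/m})}<\varepsilon/3$ for all $j$ (smallness step), then use uniform convergence on $K_{1/m}$ to make $\|\mathcal{P}_a u_{k_j}-\mathcal{P}_a u_{k_{j'}}\|_{L^q(K_{1/m})}<\varepsilon/3$ for $j,j'$ large; together, $\{\mathcal{P}_a u_{k_j}\}$ is Cauchy in $L^q(K)$, proving compactness. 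The main technical obstacle is the slice-wise interpolation producing the $\delta^{(n-q(n-1)/p)/q}$ smallness with strictly positive exponent, as this is the only place where the subcritical condition $q<np/(n-1)$ is used in an essential way.
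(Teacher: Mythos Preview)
Your proof is correct. The paper does not give its own argument but refers to Corollary~2.2 of \cite{HWY08}, whose proof proceeds along the same lines as yours: the slice-wise interpolation between the $L^p$ and $L^\infty$ bounds yields the smallness in the boundary strip with the exponent $(n-q(n-1)/p)/q>0$, and equicontinuity plus Arzel\`a--Ascoli handles the interior.
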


\begin{proof} 
The proof is the same as that of \cite[Corollary 2.2]{HWY08}.
\end{proof}

\begin{corollary}\label{cor:compactembeddingS} Suppose that $n \geq 2$ and $2 - n < a < 1$. If $1 \leq p < \infty$ and $1 \leq q < \frac{n p}{n - 1}$, then the operator
$$
\mathcal{P}_a : L^p (\partial B_1) \to L^q (B_1)
$$
is compact.
\end{corollary}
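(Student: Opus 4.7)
The plan is to reduce the compactness on the ball to Proposition \ref{lem:compactembeddingR} on the half-space via the M\"obius transformation $F$ of \eqref{Mob}. The main subtlety is that $F$ sends the point at infinity in $\overline{\R_+^n}$ to the boundary point $-e_n\in\partial B_1$, so a direct application would only give convergence on compact subsets of $\overline{B_1}\setminus\{-e_n\}$; the small neighborhood of $-e_n$ must be handled separately. To do this, I fix a cutoff $\phi\in C^\infty(\partial B_1)$ with $\phi\equiv 0$ on $B_r(-e_n)\cap\partial B_1$ and $\phi\equiv 1$ outside $B_{2r}(-e_n)\cap\partial B_1$ for some small $r>0$, and split $v_k=\phi v_k+(1-\phi)v_k$. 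The antipodal symmetry $\widetilde P_a(-\eta,-\xi)=\widetilde P_a(\eta,\xi)$ of the kernel gives the identity $\widetilde{\mathcal P}_a(v\circ\sigma)(\xi)=\widetilde{\mathcal P}_a v(-\xi)$ with $\sigma(\eta)=-\eta$; conjugating the second piece by $\sigma$ (an isometry of both $L^p(\partial B_1)$ and $L^q(B_1)$) transforms it into a function supported away from $-e_n$, so it suffices to prove compactness of $v\mapsto \widetilde{\mathcal P}_a(\phi v)$.

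Given a bounded sequence $\{v_k\}$ in $L^p(\partial B_1)$, I define
\[
u_k(y')=\bigg(\frac{\sqrt 2}{|(y',0)+e_n|}\bigg)^{n+a-2}(\phi v_k)(F(y',0)).
\]
Because $\mathrm{supp}(\phi)$ is a compact subset of $\partial B_1\setminus\{-e_n\}$, each $u_k$ is supported in the fixed bounded set $F^{-1}(\mathrm{supp}(\phi))\subset\R^{n-1}$; after changing variables $y'\leftrightarrow\eta$, the conformal weight $(2/(1+|y'|^2))^{p(n+a-2)/2-(n-1)}$ relating $\|u_k\|_{L^p(\R^{n-1})}^p$ to $\|\phi v_k\|_{L^p(\partial B_1)}^p$ is bounded above and below on this set. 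Hence $\{u_k\}$ is bounded in $L^p(\R^{n-1})$, and Proposition \ref{lem:compactembeddingR} yields a subsequence with $\mathcal P_a u_k$ convergent in $L^q_{\mathrm{loc}}(\overline{\R_+^n})$. Combining this with the identity $(\widetilde{\mathcal P}_a(\phi v_k))\circ F(x)=\big(|x+e_n|/\sqrt 2\big)^{n+a-2}\mathcal P_a u_k(x)$ and the fact that $|x+e_n|$ and the Jacobian of $F$ are bounded above and below on any compact subset of $\overline{\R_+^n}$, I obtain convergence of $\widetilde{\mathcal P}_a(\phi v_k)$ in $L^q(B_1\setminus B_\delta(-e_n))$ for every fixed $\delta>0$.

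For $\xi\in B_1\cap B_\delta(-e_n)$ with $\delta<r$, any $\eta\in\mathrm{supp}(\phi)$ satisfies $|\xi-\eta|\geq r$, so
\[
\widetilde P_a(\eta,\xi)\leq C\,\frac{(1-|\xi|^2)^{1-a}}{r^{n-a}}\leq C'\delta^{1-a}
\]
uniformly, giving $|\widetilde{\mathcal P}_a(\phi v_k)(\xi)|\leq C''\delta^{1-a}\|v_k\|_{L^p(\partial B_1)}$ and thus $\|\widetilde{\mathcal P}_a(\phi v_k)\|_{L^q(B_1\cap B_\delta(-e_n))}\leq C\delta^{1-a+n/q}$, which tends to $0$ uniformly in $k$ as $\delta\to 0$ (using $a<1$). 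A standard $\varepsilon/3$ argument combining this with the convergence on $B_1\setminus B_\delta(-e_n)$ from the previous paragraph extracts a Cauchy, hence convergent, subsequence of $\widetilde{\mathcal P}_a(\phi v_k)$ in $L^q(B_1)$. The main obstacle throughout is the blow-up of $F^{-1}$ at $-e_n$; it is bypassed by the combination of the partition of unity, the antipodal symmetry of $\widetilde P_a$, and the direct kernel estimate in this last paragraph.
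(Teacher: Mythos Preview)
Your argument is correct. The paper itself gives no details, merely citing the analogous Corollary 2.1 in Hang--Wang--Yan \cite{HWY09}; reducing to the half-space statement (Proposition \ref{lem:compactembeddingR}) via the M\"obius transform $F$ is exactly the intended route, and you have handled the only genuine issue, namely the blow-up of $F^{-1}$ at $-e_n$.

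One remark on the comparison: your use of the antipodal symmetry $\widetilde P_a(-\eta,-\xi)=\widetilde P_a(\eta,\xi)$ to dispose of the piece $(1-\phi)v_k$ supported near $-e_n$ is a pleasant shortcut, but it is specific to this kernel and not essential to the argument. The more generic version (and likely what \cite{HWY09} has in mind) is simply to cover $\partial B_1$ by finitely many caps, each avoiding some antipodal point, and on each cap use a stereographic projection from that antipode; compactness of each localized piece then follows from Proposition \ref{lem:compactembeddingR} exactly as in your second paragraph, and a finite sum of compact operators is compact. Your approach is the two-cap special case of this, with the symmetry replacing the second choice of pole. Either way, the tail estimate in your last paragraph (which relies only on $a<1$ and the support separation) is the key quantitative ingredient, and it is correctly stated.
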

\begin{proof} 
The proof is the same as that of \cite[Corollary 2.1]{HWY09}.
\end{proof}

In order to establish regularity, we need the following simple fact 
\begin{equation}\label{eq:DkPa}
|\nabla_{x'}^k P_a (y', x)| = |\nabla_{y'}^k P_a (y', x)| \leq C(n, a, k) x_n^{1 - a} (|x' - y'|^2 + x_n^2)^{ - \frac{n - a + k}{2} }
\end{equation}
for $x' \neq y'$ and $k \geq 1$. 

\begin{theorem}\label{thm:Holder} Suppose that $n \geq 2$, $2 - n < a < 1$ and $\frac{2 (n - 1)}{n + a - 2} \leq p < \infty$. Let $K \in C^1 (\R^{n - 1})$ be a positive function. If $u \in L_{loc}^p (\R^{n - 1})$ is non-negative, not identically zero and satisfies
\begin{equation}\label{eq:Holder}
K (y') u(y')^{p - 1} = \int_{\R_+^n} P_a (y', x) \left[(\mathcal{P}_a u) (x)\right]^\frac{n - a + 2}{n + a - 2} d x,
\end{equation}
then $u \in C_{loc}^\beta (\R^{n - 1})$ for any $ \beta \in (0, 1)$. 
\end{theorem}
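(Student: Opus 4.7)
The strategy is a three-step regularity lift: (i) iterate the local Lebesgue integrability of $u$ up to $L^q_{loc}$ for every $q<\infty$; (ii) deduce $u\in L^\infty_{loc}$; (iii) derive H\"older regularity by a standard singular-integral splitting of $\int[P_a(y',x)-P_a(z',x)]f(x)\,dx$. The main obstacle will be the borderline case of the bootstrap at the critical exponent.

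To begin step (i), I would fix $R>0$ and, for $y'\in B'_R$, decompose $\R^n_+$ into a near field $B^+_{10R}$ and a far field $\R^n_+\setminus B^+_{10R}$. On the far field, $P_a(y',x)\le C(R)x_n^{1-a}|x|^{-(n-a)}$, so the far-field contribution to the right-hand side of \eqref{eq:Holder} is bounded uniformly on $B'_R$ by a finite constant (the finiteness follows from the right-hand side of \eqref{eq:Holder} being finite a.e.\ by assumption). A similar far-field argument lets me replace the inner $u$ in $\mathcal{P}_a u$ by $u\chi_{B'_{20R}}$ up to a bounded error term on $B^+_{10R}$. This reduces the local regularity question to the case of compactly supported data, for which Proposition~\ref{lem:compactembeddingR} gives $\mathcal{P}_a u\in L^\sigma(\R^n_+)$ for every $\sigma<np/(n-1)$, while its dual version yields $\int P_a(y',\cdot)g\in L^\rho_{loc}(\R^{n-1})$ whenever $g\in L^\tau_{comp}(\R^n_+)$ with $\rho<\tau(n-1)/(n-\tau)$. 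Setting $Q=(n-a+2)/(n+a-2)$, the chain
\[
u\in L^p_{loc}\ \Longrightarrow\ (\mathcal{P}_a u)^Q\in L^{\sigma/Q}_{loc}\ \Longrightarrow\ u^{p-1}\in L^\rho_{loc}
\]
improves the exponent of $u$ strictly in the supercritical case. In the critical case $p=2(n-1)/(n+a-2)$ the exponents align and no direct gain is available; I would overcome this by a Brezis--Kato-style truncation, writing $\mathcal{P}_a u=\min(\mathcal{P}_a u,A)+(\mathcal{P}_a u-A)_+$. The truncated piece contributes a uniformly bounded term, while the tail $(\mathcal{P}_a u-A)_+$ has arbitrarily small $L^{2n/(n+a-2)}$-norm as $A\to\infty$ by absolute continuity of the integral, turning the failed contraction into a successful one. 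Finitely many iterations then yield $u\in L^q_{loc}$ for every $q<\infty$, and choosing $q$ so large that $(\mathcal{P}_a u)^Q\in L^r_{loc}$ for some $r>n$ forces the right-hand side of \eqref{eq:Holder} to be bounded, giving $u\in L^\infty_{loc}$.

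For step (iii), since $u$ is locally bounded, $\mathcal{P}_a u$ is bounded on compact subsets of $\overline{\R^n_+}$. For $y',z'\in B'_R$ I would estimate
\[
\int_{\R^n_+}[P_a(y',x)-P_a(z',x)]f(x)\,dx,\qquad f=(\mathcal{P}_a u)^Q,
\]
by splitting at the scale $r_0=2|y'-z'|$. The near piece $\{|x-(y',0)|\le r_0\}$ is bounded by $\|f\|_{L^\infty}\int_{B^+_{r_0}}P_a\lesssim r_0$, using that $\int_{B^+_r}P_a(y',\cdot)\lesssim r$. The far piece uses the gradient estimate \eqref{eq:DkPa} and evaluates to $O(|y'-z'|\cdot|\log|y'-z'||)$, which is absorbed into any $|y'-z'|^\beta$, $\beta<1$. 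This gives $u^{p-1}\in C^\beta_{loc}$; positivity of $u$ then follows because $u(y'_0)=0$ together with \eqref{eq:Holder} would force $\mathcal{P}_a u\equiv 0$ and hence $u\equiv 0$, so $u$ itself inherits $C^\beta_{loc}$-regularity for any $\beta<1$ on compact subsets.

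The principal obstacle is the critical case of step (i): standard Sobolev exponent arithmetic gives no integrability gain at $p=2(n-1)/(n+a-2)$, and the whole iteration depends on the truncation-plus-smallness argument, which in turn relies on $\mathcal{P}_a u$ having finite critical $L^{2n/(n+a-2)}$-norm on the relevant support (ensured by the far-field reduction to compactly supported data). Once the $L^q_{loc}$ bootstrap is closed, the remaining boundedness and H\"older estimates are of a standard singular-integral type.
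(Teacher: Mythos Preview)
Your proposal is correct and follows essentially the same architecture as the paper's proof: far-field/near-field splitting to localize, bootstrap in the subcritical range, a Brezis--Kato-type smallness argument at the critical exponent, and then kernel-difference estimates for H\"older continuity. The only notable implementation differences are that the paper obtains smallness in the critical step by shrinking the ball $B_R^+$ (so that $\|U_0\|_{L^{2n/(n+a-2)}(B_R^+)}$ is small) and packages this into a stand-alone integrability-improvement proposition, whereas you achieve smallness by truncating $\mathcal{P}_a u$ at level $A$; and the paper invokes the sharp boundedness $\mathcal{P}_a:L^p\to L^{np/(n-1)}$ (Proposition~\ref{prop:sharpwithp}) rather than the compact embedding you cite, which gives the endpoint exponent directly.
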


The proof of this H\"older regularity is given in the appendix.

Finally, we also need the following Liouville theorem proved by Wang-Zhu \cite{WZ20}.

\begin{theorem}[Wang-Zhu \cite{WZ20}] \label{Wang-Zhu} 
Suppose that $n \geq 2$ and $a < 1$. If $\Phi \in C^2 (\R_+^n) \cap C^0 (\overline{\R_+^n})$ is a solution of
$$
\left\{
\aligned
- \, {\rm div} (x_n^{a} \nabla \Phi) & = 0 ~~~~~~ \textmd{in} ~ \R_+^n, \\
\Phi & = 0 ~~~~~~ \textmd{on} ~ \R^{n - 1}
\endaligned
\right.
$$
and is bounded from below in $\R_+^n$. Then
$$
\Phi (x) = C x_n^{1 - a}
$$
for some constant $C \geq 0$.
\end{theorem}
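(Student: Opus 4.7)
The starting observation is that $\Phi_0(x):=x_n^{1-a}$ is itself a solution of the boundary value problem: one computes $x_n^a\nabla\Phi_0=(1-a)e_n$, so $\operatorname{div}(x_n^a\nabla\Phi_0)=0$, and $\Phi_0=0$ on $\R^{n-1}$ since $1-a>0$. The goal is thus to show that any $\Phi$ as in the statement must be a non-negative multiple of $\Phi_0$. The natural first move is to extend $\Phi$ across the hyperplane by \emph{odd reflection},
$$
\tilde\Phi(x',x_n):=\Phi(x',x_n)\ \text{if}\ x_n\ge 0,\quad \tilde\Phi(x',x_n):=-\Phi(x',-x_n)\ \text{if}\ x_n<0,
$$
and verify that $\tilde\Phi$ is a weak solution of $-\operatorname{div}(|x_n|^a\nabla\tilde\Phi)=0$ on $\R^n$: continuity across $\{x_n=0\}$ follows from $\Phi=0$ there, while the conormal flux $|x_n|^a\partial_{x_n}\tilde\Phi$ is even in $x_n$ (as $\Phi$ is reflected oddly and $|x_n|^a$ is even) and therefore extends continuously.

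To leverage the one-sided bound $\Phi\ge m$, I would substitute $\psi:=\Phi/x_n^{1-a}$ on $\R_+^n$; a direct calculation yields
$$
\operatorname{div}\bigl(x_n^{2-a}\nabla\psi\bigr)=0\quad\text{in}\ \R_+^n,
$$
so $\psi$ solves a degenerate elliptic equation with weight $x_n^{2-a}$. The bound $\Phi\ge m$ becomes $\psi\ge m\,x_n^{a-1}$, which forces $\psi$ to be uniformly bounded below on every strip $\{x_n\ge\delta\}$ and to have $\liminf\psi\ge 0$ as $x_n\to\infty$. The natural boundary trace of $\psi$ equals $(1-a)^{-1}\lim_{x_n\to 0^+}x_n^a\partial_{x_n}\Phi$, the weighted conormal derivative, which I would argue is non-negative by comparing $\Phi-m$ against small multiples of $\Phi_0$ in half-balls and invoking the (weighted) strong maximum principle.

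The remaining step is a Liouville argument: conclude $\psi\equiv C\ge 0$ for the operator $\operatorname{div}(x_n^{2-a}\nabla\cdot)$. Reflecting $\psi$ evenly across $\{x_n=0\}$, I would apply a weighted Harnack inequality on expanding balls to obtain an oscillation decay $\operatorname{osc}_{B_{R/2}}\psi\le\theta\operatorname{osc}_{B_R}\psi$ with $\theta<1$ uniform in $R$, and let $R\to\infty$. This forces $\psi$ to be constant; non-negativity of the boundary trace then yields $C\ge 0$, so $\Phi=Cx_n^{1-a}$ as claimed.

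\textbf{Main obstacle.} The Fabes--Kenig--Serapioni Harnack theory for degenerate elliptic equations requires the weight to lie in the Muckenhoupt class $A_2$, i.e.\ $-1<2-a<1$, whereas the hypothesis $2-n<a<1$ gives $1<2-a<n$, placing us outside the classical $A_2$ range (and far outside for large $n$). Establishing Harnack and the strong maximum principle for $\operatorname{div}(x_n^{2-a}\nabla\cdot)$ in this super-singular regime --- equivalently, justifying the odd-reflection formulation of the original equation when $a\le-1$, where $|x_n|^a$ fails to be locally integrable across $\{x_n=0\}$ and the weak formulation must be restricted to test functions vanishing on the hyperplane --- is the key technical hurdle. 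A cleaner alternative would be a blow-down argument: set $\Phi_\lambda(x):=\Phi(\lambda x)/\lambda^{1-a}$, which preserves both the PDE and the lower bound with $m/\lambda^{1-a}\to 0$, and extract a subsequential limit as $\lambda\to\infty$ to identify the asymptotic profile directly as $Cx_n^{1-a}$.
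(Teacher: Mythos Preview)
The paper does not prove this theorem. It is quoted verbatim from Wang--Zhu \cite{WZ20} and invoked as a black box in the proof of Theorem~\ref{thm:blowup}; no argument for it appears anywhere in the paper. There is therefore no in-paper proof to compare your proposal against.

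On the proposal itself: your substitution $\psi=\Phi/x_n^{1-a}$ and the resulting equation $\operatorname{div}(x_n^{2-a}\nabla\psi)=0$ are correct, and you have put your finger on the real difficulty. Note, however, that the hypothesis of the theorem is only $a<1$, not $2-n<a<1$ (the latter range is used elsewhere in the paper but not here), so the weight $|x_n|^{2-a}$ has exponent $2-a>1$ and is \emph{never} in $A_2$; the Fabes--Kenig--Serapioni machinery is unavailable for the $\psi$-equation in every case, not just for large $n$. Your blow-down alternative does not escape this: extracting a subsequential limit of $\Phi_\lambda$ still requires uniform local estimates (Harnack or H\"older) up to the boundary, and those are precisely what is in question. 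Likewise, the odd reflection of $\Phi$ itself runs into the non-integrability of $|x_n|^a$ across $\{x_n=0\}$ once $a\le -1$, as you note. So the plan, as written, is a reasonable outline for the $A_2$ range $-1<a<1$ but does not cover the full statement; for the general case one must consult \cite{WZ20} for an argument that exploits the specific structure of the weight rather than abstract $A_2$ theory.
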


\section{A blow-up analysis}\label{sec:blowup}

The local blow up analysis for the non-local integral equation \eqref{eq:Holder} is as follows.

\begin{theorem}\label{thm:blowup} Suppose that $n \geq 2$ and $2 - n < a < 1$. Let $\frac{2 (n - 1)}{n + a - 2} \leq p_i < \frac{2 n}{n + a - 2}$ be a sequence of numbers with $\lim_{i \to \infty} p_i = \frac{2 (n - 1)}{n + a - 2}$, and $K_i \in C^1 (B_1')$ be a sequence of positive functions satisfying
$$
K_i \geq \frac{1}{c_0}, ~~~~~~ \| K_i \|_{C^1 (B_1')} \leq c_0
$$
for some constant $c_0 \geq 1$ independent of $i$. Suppose that $u_i \in C(\R^{n - 1})$ is a sequence of non-negative solutions of
\begin{equation}\label{Kiuipi}
K_i (y') u_i (y')^{p_i - 1} = \int_{\R_+^n} P_a (y', x) \left[(\mathcal{P}_a u_i) (x)\right]^\frac{n - a + 2}{n + a - 2} d x ~~~~~~ \textmd{for} ~ y' \in B_1'
\end{equation}
and $u_i (0) \to + \infty$ as $i \to \infty$. Suppose that $R_i u_i (0)^{ p_i - \frac{2 n}{n + a - 2} } \to 0$ for some $R_i \to + \infty$ and
\begin{equation}\label{eq:uixlebui0}
u_i (y') \leq b u_i (0) ~~~~~~ \textmd{for} ~ |y'| < R_i u_i (0)^{ p_i - \frac{2 n}{n + a - 2} },
\end{equation}
where $b > 0$ is independent of $i$. Then, after passing to a subsequence, we have
\begin{equation}\label{eq:phiiconverge}
\phi_i (y') : = \frac{1}{u_i (0)} u_i \big( u_i (0)^{ p_i - \frac{2 n}{n + a - 2} } y' \big) \to \phi (y') ~~~~~~ \textmd{in} ~ C_{loc}^{1/2} (\R^{n - 1}),
\end{equation}
where $\phi > 0$ satisfies
$$
K \phi (y')^\frac{n - a}{n + a - 2} = \int_{\R_+^n} P_a (y', x) \left[(\mathcal{P}_a \phi) (x)\right]^\frac{n - a + 2}{n + a - 2} d x ~~~~~~ \textmd{for} ~ y' \in \R^{n - 1}
$$
and $K : = \lim_{i \to \infty} K_i (0) > 0$ along the subsequence.
\end{theorem}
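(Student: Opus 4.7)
The plan is to rescale so that $\phi_i$ satisfies a version of \eqref{Kiuipi} on an expanding ball, establish uniform local H\"older bounds, extract a subsequential limit by Arzel\`a–Ascoli, and pass to the limit in the integral equation.

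\textbf{Rescaling.} Set $\lambda_i := u_i(0)^{p_i - 2n/(n+a-2)}$; since $p_i < 2n/(n+a-2)$ and $u_i(0) \to \infty$, one has $\lambda_i \to 0^+$, and by hypothesis $R_i \lambda_i \to 0$. The scaling $P_a(\lambda y', \lambda x) = \lambda^{1-n} P_a(y', x)$, combined with the substitutions $z' = \lambda_i w'$ inside $\mathcal{P}_a u_i$ and $x = \lambda_i \tilde{x}$, gives $(\mathcal{P}_a u_i)(\lambda_i \tilde{x}) = u_i(0) (\mathcal{P}_a \phi_i)(\tilde{x})$. The exponent identity
$$(p_i - 2n/(n+a-2)) + (n-a+2)/(n+a-2) - (p_i - 1) = 0$$
then forces the powers of $u_i(0)$ on the two sides to cancel, so that $\phi_i$ satisfies
\begin{equation}\label{eq:plan-rescaled}
K_i(\lambda_i \tilde{y}') \phi_i(\tilde{y}')^{p_i - 1} = \int_{\R_+^n} P_a(\tilde{y}', \tilde{x}) [(\mathcal{P}_a \phi_i)(\tilde{x})]^{(n-a+2)/(n+a-2)} d\tilde{x}
\end{equation}
for $\tilde{y}' \in B'_{R_i}$, with $\phi_i(0) = 1$ and $0 \le \phi_i \le b$ on $B'_{R_i}$.

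\textbf{H\"older bounds, limit, positivity.} Since $0 \le \phi_i \le b$ on $B'_{R_i}$ and $K_i \ge 1/c_0$, equation \eqref{eq:plan-rescaled} gives a uniform $L^\infty$ bound on its right-hand side over any fixed compact $K \subset \R^{n-1}$ for $i$ large. I would then revisit the argument behind Theorem \ref{thm:Holder} in its localized form, tracking constants to check that they depend only on $\|\phi_i\|_{L^\infty}$ on a slightly enlarged ball, on $c_0$, $b$, and on the bounded sequence $p_i$. The kernel bound \eqref{eq:DkPa} produces a H\"older modulus once the tail contribution of $\mathcal{P}_a \phi_i$ is under control, and one closes that estimate by using \eqref{eq:plan-rescaled} itself to bound $\mathcal{P}_a \phi_i$ uniformly in $i$. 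With a uniform $C^{1/2}$ bound on compacts in hand, Arzel\`a–Ascoli yields a subsequence $\phi_i \to \phi$ in $C^{1/2}_{\rm loc}(\R^{n-1})$ with $\phi \ge 0$ and $\phi(0) = 1$; a further subsequence gives $K_i(0) \to K \in [1/c_0, c_0]$. In the limit of \eqref{eq:plan-rescaled}, the left side converges to $K \phi(\tilde{y}')^{(n-a)/(n+a-2)}$ (using $p_i - 1 \to (n-a)/(n+a-2)$ and $K_i(\lambda_i \tilde{y}') \to K$ uniformly on compacts from $\|K_i\|_{C^1} \le c_0$ and $\lambda_i \to 0$), and the right side converges by dominated convergence, with dominant furnished by $P_a$ and the uniform bound on $\mathcal{P}_a \phi_i$. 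Finally, if $\phi(y'_0) = 0$ at some point, the limit equation forces $\mathcal{P}_a \phi \equiv 0$ a.e., hence $\phi \equiv 0$, contradicting $\phi(0) = 1$; thus $\phi > 0$ everywhere.

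\textbf{Main obstacle.} The crux is the uniform H\"older bound: although $\phi_i$ is locally bounded by hypothesis, the right side of \eqref{eq:plan-rescaled} couples $\phi_i$ to itself globally through $\mathcal{P}_a \phi_i$, so one must control the far-field contributions of $\phi_i$ (outside $B'_{R_i}$, where no a priori bound is given) uniformly in $i$. Only once these tails are handled can one extract a genuine $C^{1/2}$ compactness statement for $\{\phi_i\}$ and pass the nonlocal integral to the limit on all of $\R^{n-1}$.
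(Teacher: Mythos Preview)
Your rescaling and the strategy for the local H\"older bounds match the paper's approach (its Step~1), including the key observation that the equation itself can be used to control the tail of $\mathcal{P}_a\phi_i$ locally. The genuine gap is in passing to the limit on the right-hand side.

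First, your dominated convergence claim fails outright: $\int_{\R_+^n} P_a(y',x)\,dx = +\infty$ (after integrating out $x'$ the remaining $x_n$-integral is $\int_0^\infty 1\,dx_n$), so even a global uniform bound on $\mathcal{P}_a\phi_i$ would not furnish an integrable dominant. The bound you actually obtain is only on compact subsets of $\overline{\R_+^n}$, which is weaker still.

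Second, and more fundamentally, you have not shown that $\mathcal{P}_a\phi_i \to \mathcal{P}_a\phi$ even pointwise. Since $\phi_i \to \phi$ only in $C^{1/2}_{\rm loc}$ while $\mathcal{P}_a$ integrates $\phi_i$ over all of $\R^{n-1}$ (where no a~priori control is available), the limit $\tilde\Phi := \lim_i \mathcal{P}_a\phi_i$ a~priori satisfies only $\mathrm{div}(x_n^a\nabla\tilde\Phi)=0$ in $\R_+^n$ with boundary value $\phi$. The paper invokes the Liouville theorem of Wang--Zhu (Theorem~\ref{Wang-Zhu}) to deduce $\tilde\Phi = \mathcal{P}_a\phi + c_1 x_n^{1-a}$ for some $c_1\ge 0$, and then plugs $y'=0$ into the limit equation to force $c_1=0$. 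Without this step there is no reason the limit should equal $\mathcal{P}_a\phi$.

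Third, the paper does not pass to the limit in one stroke. It splits the $x$-integral as $\int_{B_R^+}+\,h_i(R,y')$, passes to the limit on $B_R^+$ using the compactness of $\mathcal{P}_a\phi_i$ there, shows $h_i(R,y')\to h(R,y')$ with $h$ non-increasing in $R$ and $\lim_{R\to\infty} h(R,y')$ equal to a constant $c_2$ independent of $y'$, and finally argues $c_2=0$ (otherwise $\phi$ would be bounded below by a positive constant, making the right-hand side infinite). Your ``Main obstacle'' paragraph correctly flags the far field as the crux, but the obstacle recurs at the limit passage, not only in the H\"older estimate; your proposal does not address it there.
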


\begin{proof} It follows from \eqref{Kiuipi} and \eqref{eq:phiiconverge} that $\phi_i$ satisfies the equation
\begin{equation}\label{eq:Hiphii}
H_i (y') \phi_i (y')^{p_i - 1} = \int_{\R_+^n} P_a (y', x) \left[(\mathcal{P}_a \phi_i) (x)\right]^\frac{n - a + 2}{n + a - 2} d x ~~~~~~ \textmd{for} ~ |y'| < R_i,
\end{equation}
where $H_i (y') : = K_i \big( u_i (0)^{ p_i - \frac{2 n}{n + a - 2} } y' \big)$. Moreover, by \eqref{eq:uixlebui0}, we have
\begin{equation}\label{eq:phiileb}
0 \leq \phi_i (y') \leq b ~~~~~~ \textmd{for} ~ |y'| < R_i.
\end{equation}
The proof consists of two steps.

\vskip0.1in

{\bf Step 1. Estimate the locally uniform bound of $\{ \phi_i \}$ in some H{\" o}lder spaces.}

\vskip0.1in

Fixing $100 < R < R_i/2$ for large $i$, we can define
$$
\Phi_i' = \mathcal{P}_a (\chi_{B_R'} \phi_i) ~~~~~~ \textmd{and} ~~~~~~ \Phi_i'' = \mathcal{P}_a ( (1 - \chi_{B_R'}) \phi_i),
$$
where $\chi_{B_R'}$ is the characterization function of $B_R'$. Then
$$
\mathcal{P}_a \phi_i = \Phi_i' + \Phi_i''.
$$
By \eqref{eq:phiileb} and the property of $\mathcal{P}_a$ we can get
\begin{equation}\label{eq:Phii1leb}
0 \leq \Phi_i' \leq b.
\end{equation}
Since $K_i \leq c_0$ on $B'_1$, by \eqref{eq:Hiphii} and \eqref{eq:phiileb}, for any $|y'| < R - 2$ we have,
$$
\aligned
c_0 b^{p_i - 1}
& \geq \int_{ B_{1/2} (y', 1) } P_a (y', x) \left[(\mathcal{P}_a \phi_i) (x)\right]^\frac{n - a + 2}{n + a - 2} d x \\
& \geq \frac{1}{C} \int_{ B_{1/2} (y', 1) } \left[(\mathcal{P}_a \phi_i) (x)\right]^\frac{n - a + 2}{n + a - 2} d x \\
& \geq \frac{1}{C} \left[(\mathcal{P}_a \phi_i) (\bar{x})\right]^\frac{n - a + 2}{n + a - 2}
\endaligned
$$
for some $\bar{x} \in \overline{B}_{1/2} (y', 1)$, where we used the mean value theorem for integrals in the last inequality and $C > 0$ depends only on $n$ and $a$. It follows that
$$
\Phi_i'' (\bar{x}) \leq (\mathcal{P}_a \phi_i) (\bar{x}) \leq C b^\frac{(p_i - 1) (n + a - 2)}{n - a + 2}.
$$
Since $|\bar{x}'| \leq R - 1$ and $\frac{1}{2} \leq \bar{x}_n \leq \frac{3}{2}$,
$$
\aligned
C b^\frac{(p_i - 1) (n + a - 2)}{n - a + 2} \geq \Phi_i'' (\bar{x}) & = c_{n, a} \int_{ \R^{n - 1} \setminus B_R' } \frac{ \bar{x}_n^{1 - a} }{ (|\bar{x}' - z'|^2 + \bar{x}_n^2)^\frac{n - a}{2} } \phi_i (z') d z' \\
& \geq \frac{1}{C} \int_{ \R^{n - 1} \setminus B_R' } \frac{\phi_i (z')}{ |\bar{x}' - z'|^{n - a} } d z'. 
\endaligned
$$
Therefore, for any $|y'| < R - 2$ and $x \in B_1' (y') \times (0, 1]$, we have
\begin{equation}\label{Phi02698}
\aligned
 \frac{\Phi_i'' (x)}{x_n^{1 - a}} &  \leq C \int_{ \R^{n - 1} \setminus B_R' } \frac{\phi_i (z')}{ |x' - z'|^{n - a} } d z' \\ 
& \leq C \int_{ \R^{n - 1} \setminus B_R' } \frac{\phi_i (z')}{ |\bar{x}' - z'|^{n - a} } d z' \\
& \leq C b^\frac{(p_i - 1) (n + a - 2)}{n - a + 2},
\endaligned
\end{equation}
where the second inequality holds since
$$
|\bar{x}' - z'| \leq |\bar{x}' - x'| + |x' - z'| \leq 2 + |x' - z'| \leq 3 |x' - z'|.
$$
This together with \eqref{eq:Phii1leb} implies that
$$
(\mathcal{P}_a \phi_i) (x) \leq C(n, a, c_0, b) ~~~~~~ \forall ~ x \in B_{R - 2}' \times (0, 1].
$$
Using the above estimate,  we have by direct calculations that
$$
\bigg\| \int_{B_{R - 2}' \times (0, 1]} P_a (y', x) \left[(\mathcal{P}_a \phi_i) (x)\right]^\frac{n - a + 2}{n + a - 2} d x \bigg\|_{ C^\beta (B_{R - 3}') } \leq C(n, a, b, c_0, R, \beta)
$$
for any $\beta \in (0, 1)$. On the other hand, for $|y'| < R - 3$, by \eqref{eq:DkPa} we have
$$
\aligned
& \bigg| \nabla_{y'} \bigg( \int_{ \R_+^n \setminus B_{R - 2}' \times (0, 1] } P_a (y', x) \left[(\mathcal{P}_a \phi_i) (x)\right]^\frac{n - a + 2}{n + a - 2} d x \bigg) \bigg| \\
& \leq C \int_{ \R_+^n \setminus B_{R - 2}' \times (0, 1] } P_a (y', x) \left[(\mathcal{P}_a \phi_i) (x)\right]^\frac{n - a + 2}{n + a - 2} d x \\
& \leq C H_i (y') \phi_i (y')^{p_i - 1} \\
& \leq C b^{p_i - 1} \\
& \leq C(n, a, c_0, b).
\endaligned
$$
Combing the above two estimates and using \eqref{eq:Hiphii}, we can obtain
\begin{equation}\label{eq:phiipiHolder}
\| \phi_i^{p_i - 1} \|_{ C^{3/4} (B_{R - 3}') } \leq C(n, a, b, c_0, R).
\end{equation}

Since $\phi_i (0)^{p_i - 1} = 1$, by \eqref{eq:phiipiHolder} there exists $\delta > 0$ depending only on $n$, $a$, $b$ and $c_0$ such that $\phi_i (y')^{p_i - 1} \geq \frac{1}{2}$ for all $|y'| < \delta$. Hence,
$$
(\mathcal{P}_a \phi_i) (x) \geq \frac{1}{C} \int_{B_\delta'} \frac{ x_n^{1 - a} }{ (|x' - y'|^2 + x_n^2)^\frac{n - a}{2} } 2^{ - \frac{1}{p_i - 1} } d y' \geq \frac{1}{C} \frac{ x_n^{1 - a} }{ (1 + |x|)^{n - a} }.
$$
Again, using \eqref{eq:Hiphii} we can get for $|y'| < R - 3$,
$$
\phi_i (y')^{p_i - 1} \geq \frac{1}{C(n, a, c_0, b, R)} > 0.
$$
This together with \eqref{eq:phiipiHolder} implies that
\begin{equation}\label{eq:phiiHolder}
\| \phi_i \|_{ C^{3/4} (B_{R - 3}') } \leq C(n, a, c_0, b, R).
\end{equation}
Hence, \eqref{eq:phiiconverge} is proved.

\vskip0.1in

{\bf Step 2. Show the convergence of $\mathcal{P}_a \phi_i$ and the equation of $\phi_i$.}

\vskip0.1in

Fixing $100 < R < R_i/2$ for large $i$, we write \eqref{eq:Hiphii} as
\begin{equation}\label{eq:phiitwoparts}
H_i (y') \phi_i (y')^{p_i - 1} = \int_{B_R^+} P_a (y', x) \left[(\mathcal{P}_a \phi_i) (x)\right]^\frac{n - a + 2}{n + a - 2} d x + h_i (R, y'),
\end{equation}
where
$$
h_i (R, y') = \int_{\R_+^n \setminus B_R^+} P_a (y', x) \left[(\mathcal{P}_a \phi_i) (x)\right]^\frac{n - a + 2}{n + a - 2} d x \geq 0.
$$
By \eqref{eq:DkPa} and \eqref{eq:Hiphii}, for any $|y'| < R - 1$, we have
$$
|\nabla h_i (R, y')| \leq C h_i (R, y') \leq C H_i (y') \phi_i (y')^{p_i - 1} \leq C(n, a, c_0, b).
$$
Therefore, after passing to a subsequence,
$$
h_i (R, y') \to h(R, y')
$$
for some non-negative function $h \in C^{3/4} (B_{R - 1})$.

Similar as in Step 1, we write $\mathcal{P}_a \phi_i$ into following two parts $\Phi_i'$ and $\Phi_i''$: 
$$
\Phi_i' = \mathcal{P}_a (\eta_R \phi_i) ~~~~~~ \textmd{and} ~~~~~~ \Phi_i'' = \mathcal{P}_a ( (1 - \eta_R) \phi_i),
$$
where $\eta_R$ is a smooth cut-off function satisfying $\eta_R \equiv 1$ in $B_{R-4}'$ and $\eta_R \equiv 0$ in $(B_{R-3}')^c$. By using \eqref{eq:phiiHolder} and noticing that 
$$
\Phi_i' (x) = c_{n,a}\int_{\mathbb{R}^{n-1}} \frac{1}{(|z'|^2 +1)^{\frac{n-a}{2}}} (\eta_R \phi_i)(x' - x_n z') dz', 
$$
we can obtain $\| \Phi_i' \|_{ C^{\alpha} (B_{R/2}^+) } \leq C(n, a, c_0, b, R)$ with $\alpha:= \min\{3/4, 1-a\}  > 0$.  On the other hand, similar to \eqref{Phi02698} we have  
$$
\left\| \frac{\Phi_i'' }{x_n^{1 - a}} \right\|_{C^1(B_{R/2}^+)} \leq C(n, a, c_0, b, R), 
$$
and hence $\| \Phi_i'' \|_{ C^{\alpha} (B_{R/2}^+) } \leq C(n, a, c_0, b, R)$. Therefore, after passing to a subsequence, we have
$$
\mathcal{P}_a \phi_i \to \tilde{\Phi} ~~~~~~ \textmd{in} ~ C_{loc}^{\alpha/2} (\overline{\R_+^n})
$$
for some $\tilde{\Phi} \geq 0$ satisfying 
$$
\left\{
\aligned
-\,{\rm div} (x_n^{a} \nabla \tilde{\Phi}) & = 0 ~~~~~~ \textmd{in} ~ \R_+^n, \\
\tilde{\Phi} & = \phi ~~~~~~ \textmd{on} ~ \R^{n - 1}.
\endaligned
\right.
$$
From \eqref{eq:phiileb}, we know that $0 \leq \phi \leq b$ in the whole $\R^{n - 1}$, and thus $\mathcal{P}_a \phi$ is bounded in $\R_+^n$. Hence, $\tilde{\Phi} - \mathcal{P}_a \phi \in C^2 (\R_+^n) \cap C^0 (\overline{\R_+^n})$ satisfies
$$
\left\{
\aligned
-\,{\rm div} (x_n^{a} \nabla (\tilde{\Phi} - \mathcal{P}_a \phi) ) & = 0 ~~~~~~ \textmd{in} ~ \R_+^n, \\
\tilde{\Phi} - \mathcal{P}_a \phi & = 0 ~~~~~~ \textmd{on} ~ \R^{n - 1}.
\endaligned
\right.
$$
It follows from the Liouville-type result in Theorem \ref{Wang-Zhu} that
\begin{equation}\label{eq:tildePhi}
\tilde{\Phi} = \mathcal{P}_a \phi + c_1 x_n^{1 - a}
\end{equation}
for some constant $c_1 \geq 0$. Sending $i \to \infty$ in \eqref{eq:phiitwoparts}, we have
\begin{equation}\label{eq:phtwoparts}
K \phi (y')^\frac{n - a}{n + a - 2} = \int_{B_R^+} P_a (y', x) \tilde{\Phi} (x)^\frac{n - a + 2}{n + a - 2} d x + h(R, y').
\end{equation}
If $c_1 > 0$ in \eqref{eq:tildePhi}, taking $y' = 0$ and sending $R \to \infty$ we obtain that 
$$
K \phi (0)^\frac{n - a}{n + a - 2} \geq \int_{B_R^+} P_a (0, x) \tilde{\Phi} (x)^\frac{n - a + 2}{n + a - 2} d x \to \infty.
$$
This is a contradiction. Hence, $c_1 = 0$ and $\tilde{\Phi} = \mathcal{P}_a \phi$.

Now we adapt some arguments in \cite[Proposition 2.9]{JLX17}. By \eqref{eq:phtwoparts}, $h(R, y')$ is non-increasing with respect to $R$. Notice that for $R \gg |y'|$,
$$
\aligned
\frac{ R^{n - a} }{ (R + |y'|)^{n - a} } h_i (R, 0) & \leq h_i (R, y') \\
&= c_{n, a} \int_{\R_+^n \setminus B_R^+} \frac{ |x|^{n - a} }{ (|x' - y'|^2 + x_n^2)^\frac{n - a}{2} } \frac{ x_n^{1 - a} }{ |x|^{n - a} } \left[(\mathcal{P}_a \phi_i) (x)\right]^\frac{n - a + 2}{n + a - 2} d x \\
& \leq \frac{ R^{n - a} }{ (R - |y'|)^{n - a} } h_i (R, 0).
\endaligned
$$
It follows that
$$
\lim_{R \to \infty} h(R, y') = \lim_{R \to \infty} h(R, 0) = : c_2 \geq 0.
$$
Sending $R$ to $\infty$ in \eqref{eq:phtwoparts}, by the Lebesgue's monotone convergence theorem we have
$$
K \phi (y')^\frac{n - a}{n + a - 2} = \int_{\R_+^n} P_a (y', x) \left[(P_a \phi) (x)\right]^\frac{n - a + 2}{n + a - 2} d x + c_2.
$$
If $c_2 > 0$, then $\phi \geq \big( \frac{c_2}{c_0} \big)^\frac{n + a - 2}{n - a}$ and thus $P_a \phi \geq \big( \frac{c_2}{c_0} \big)^\frac{n + a - 2}{n - a}$. This is impossible, since otherwise the integral in the right-hand side is infinity. Hence $c_2 = 0$. The proof of Theorem \ref{thm:blowup} is completed.
\end{proof}

\section{A variational problem}\label{sec:variational}

Let $K \in C^1 (\partial B_1)$ be a positive function satisfying $K(\xi) = K(- \xi)$, and $L_{\rm as}^p (\partial B_1) \subset L^p (\partial B_1)$ ($p \geq 1$) be the set of antipodally symmetric functions. For $p \geq \frac{2 (n - 1)}{n + a - 2}$, define
$$
\lambda_{ {\rm as}, p } (K) = \sup \bigg\{ \int_{B_1} |\mathcal{P}_a v|^\frac{2 n}{n + a - 2} d \xi : v \in L_{\rm as}^p (\partial B_1) ~ \textmd{with} ~ \int_{\partial B_1} K |v|^p d s = 1 \bigg\}.
$$
Denote
$$
\lambda_{ {\rm as}, \frac{2 (n - 1)}{n + a - 2} } (K) = \lambda_{\rm as} (K).
$$

\begin{proposition}\label{prop:lambdaK} If
\begin{equation}\label{eq:lambdaKge}
\lambda_{\rm as} (K) > \frac{ \mathcal{S}_{n, a}^\frac{2 n}{n + a - 2} }{ (\min_{\partial B_1} K)^\frac{n}{n - 1} 2^\frac{1}{n - 1} },
\end{equation}
where $\mathcal{S}_{n, a}$ is the sharp constant in the inequality \eqref{Pa}, then $\lambda_{\rm as} (K)$ is achieved.
\end{proposition}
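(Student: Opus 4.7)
The strategy is subcritical approximation combined with the blow-up analysis of Section \ref{sec:blowup}. For any $p > p_* := \tfrac{2(n-1)}{n+a-2}$, Corollary \ref{cor:compactembeddingS} furnishes the compact embedding $\mathcal{P}_a : L^p(\partial B_1) \to L^{2n/(n+a-2)}(B_1)$, since in this range $\tfrac{np}{n-1} > \tfrac{2n}{n+a-2}$. The direct method in the closed subspace $L^p_{\rm as}(\partial B_1)$ then produces a nonnegative maximizer $v_p$ of $\lambda_{{\rm as},p}(K)$ with $\int_{\partial B_1} K v_p^p\, ds = 1$ (one may assume $v_p \geq 0$ by replacing $v$ with $|v|$ and using $|\mathcal{P}_a v| \leq \mathcal{P}_a |v|$). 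Its Euler-Lagrange equation, pulled back to $\R_+^n$ via the M\"obius map \eqref{Mob}, has the form \eqref{eq:Holder} with weight $\lambda_{{\rm as},p}(K) K$, so Theorem \ref{thm:Holder} gives $v_p \in C^\beta(\partial B_1)$. A density argument combined with H\"older's inequality on $\partial B_1$ also yields $\lim_{p \searrow p_*} \lambda_{{\rm as},p}(K) = \lambda_{\rm as}(K)$.

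\textbf{Dichotomy and blow-up.} Set $M_p := \|v_p\|_{L^\infty}$. If $\{M_{p_i}\}$ remains bounded along some $p_i \searrow p_*$, bootstrap in the integral equation gives uniform H\"older estimates, and Arzel\`a-Ascoli yields (up to a subsequence) an antipodally symmetric limit $v_* \in C(\partial B_1)$ which attains $\lambda_{\rm as}(K)$, finishing the proof. It therefore suffices to rule out $M_{p_i} \to \infty$. Suppose the contrary and let $\xi_i$ maximize $v_{p_i}$; by antipodal symmetry $-\xi_i$ is also a maximum, and we may assume $\xi_i \to \xi_\infty$ along a subsequence. Via a M\"obius transformation sending $\xi_\infty$ to the origin of $\R^{n-1}$, Theorem \ref{thm:blowup} produces a positive limit profile $\phi$ on $\R^{n-1}$ solving the limiting integral equation with constant coefficient $K_\infty := \lambda_{\rm as}(K) K(\xi_\infty)$. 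Multiplying this equation by $\phi$ and integrating gives
\begin{equation*}
K_\infty \int_{\R^{n-1}} \phi^{p_*}\, dy' \;=\; \int_{\R_+^n} (\mathcal{P}_a \phi)^{2n/(n+a-2)}\, dx,
\end{equation*}
while Chen's sharp inequality \eqref{Pa} bounds the right-hand side by $\mathcal{S}_{n,a}^{2n/(n+a-2)} \bigl(\int \phi^{p_*}\bigr)^{n/(n-1)}$. Setting $m := K(\xi_\infty) \int \phi^{p_*}\, dy' > 0$ (the bubble mass), the bubble energy is thus at most $\mathcal{S}_{n,a}^{2n/(n+a-2)} m^{n/(n-1)} / K(\xi_\infty)^{n/(n-1)}$, and the antipodal bubble at $-\xi_\infty$ contributes the same amount.

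\textbf{Conclusion and main difficulty.} A concentration-compactness decomposition, using the subcritical compactness of $\mathcal{P}_a$ to control contributions away from the bubble locations, gives
\begin{equation*}
\lambda_{\rm as}(K) \;\leq\; \lambda_{\rm as}(K)\,(1-M)^{n/(n-1)} + \sum_j \frac{\mathcal{S}_{n,a}^{2n/(n+a-2)}}{K(\xi_{\infty,j})^{n/(n-1)}}\, m_j^{n/(n-1)},
\end{equation*}
where $M := \sum_j m_j \in (0,1]$, the first term bounds the weak-limit contribution directly from the definition of $\lambda_{\rm as}(K)$, and the bubbles come in antipodal pairs with equal mass. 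Convexity of $t \mapsto t^{n/(n-1)}$ gives $\sum_j m_j^{n/(n-1)} \leq 2 (M/2)^{n/(n-1)} = 2^{-1/(n-1)} M^{n/(n-1)}$, and the elementary inequality $M^r + (1-M)^r \leq 1$ on $[0,1]$ with $r := \tfrac{n}{n-1} > 1$ then reduces the above display to
\begin{equation*}
\lambda_{\rm as}(K) \;\leq\; \frac{\mathcal{S}_{n,a}^{2n/(n+a-2)}}{2^{1/(n-1)} (\min_{\partial B_1} K)^{n/(n-1)}},
\end{equation*}
contradicting \eqref{eq:lambdaKge}. The main obstacle is this concentration-compactness step: because $\mathcal{P}_a$ is nonlocal, one must rule out hidden ``neck'' contributions beyond the identified bubbles and weak limit, and this is precisely what the vanishing statements $c_1 = c_2 = 0$ in the proof of Theorem \ref{thm:blowup} are designed to deliver.
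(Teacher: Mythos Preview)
Your overall architecture---subcritical maximizers $v_{p_i}$ via Corollary \ref{cor:compactembeddingS}, regularity from Theorem \ref{thm:Holder}, then a dichotomy on $\|v_{p_i}\|_{L^\infty}$ with Theorem \ref{thm:blowup} ruling out blow-up---is exactly the paper's. The difference, and the weak spot, is the contradiction step.

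You set up a full concentration--compactness decomposition (weak limit plus antipodal bubble pairs, splitting of both the constraint mass and the energy), then close with convexity. But you do not prove that decomposition, and for this nonlocal operator the clean energy splitting you write down is genuinely nontrivial; your appeal to ``$c_1=c_2=0$ in the proof of Theorem \ref{thm:blowup}'' is a misreading---those vanishing statements only ensure that the \emph{single} rescaled limit $\phi$ solves the clean bubble equation, they say nothing about a global profile decomposition of $v_i$ or about cross terms in $\int|\mathcal P_a v_i|^{2n/(n+a-2)}$. So the step you flag as the ``main obstacle'' is left as a gap.

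The paper avoids all of this. It never needs a profile decomposition or a weak limit term. From the normalization and antipodal symmetry it just computes
\[
1=\int_{\partial B_1}K v_i^{p_i}\,ds\;\ge\;2\int_{F(B'_\delta)}K v_i^{p_i}\,ds\;\longrightarrow\;2K(\bar\eta)\int_{\R^{n-1}}\phi^{p_*}\,dy',
\]
so $\int\phi^{p_*}\le \bigl(2K(\bar\eta)\bigr)^{-1}$. Combining your identity $K_\infty\int\phi^{p_*}=\int(\mathcal P_a\phi)^{2n/(n+a-2)}$ (with $K_\infty=\lambda K(\bar\eta)$) and the sharp inequality \eqref{Pa} gives
\[
\mathcal S_{n,a}^{\frac{2n}{n+a-2}}\;\ge\;\lambda K(\bar\eta)\Bigl(\textstyle\int\phi^{p_*}\Bigr)^{-\frac{1}{n-1}}\;\ge\;\lambda\,K(\bar\eta)^{\frac{n}{n-1}}2^{\frac{1}{n-1}},
\]
which is the contradiction. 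So one bubble, the factor $2$ from antipodality in the \emph{constraint} alone, and two lines replace your whole concentration--compactness machinery. A minor side point: the paper only uses $\liminf_{p\searrow p_*}\lambda_{{\rm as},p}(K)\ge\lambda_{\rm as}(K)$, not the full limit you assert; equality is recovered \emph{a posteriori} once the uniformly bounded $v_i$ converge.
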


\begin{proof} We claim that
$$
\liminf_{ p \searrow \frac{2 (n - 1)}{n + a - 2} } \lambda_{ {\rm as}, p } (K) \geq \lambda_{\rm as} (K).
$$
For any $\varepsilon > 0$, by the definition of $\lambda_{\rm as} (K)$, we can find a function $v \in L_{\rm as}^\infty (\partial B_1)$ such that
$$
\int_{B_1} |\mathcal{P}_a v|^\frac{2 n}{n + a - 2} d \xi > \lambda_{\rm as} (K) - \varepsilon ~~~~~~ \textmd{and} ~~~~~ \int_{\partial B_1} K |v|^\frac{2 (n - 1)}{n + a - 2} d s = 1.
$$
Let $V_p : = \int_{\partial B_1} K |v|^p d s$. Since
$$
\lim_{ p \to \frac{2 (n - 1)}{n + a - 2} } V_p = \int_{\partial B_1} K |v|^\frac{2 (n - 1)}{n + a - 2} d s = 1, 
$$
we have,  for $p$ close to $\frac{2 (n - 1)}{n + a - 2}$ sufficiently, that 
$$
\lambda_{ {\rm as}, p } (K) \geq \int_{B_1} \bigg| \mathcal{P}_a \bigg( \frac{v}{ V_p^{1/p} } \bigg) \bigg|^\frac{2 n}{n + a - 2} d \xi \geq \lambda_{\rm as} (K) - 2 \varepsilon.
$$
Since $\varepsilon$ is arbitrary, the claim is proved.

By the above claim, we can find $p_i \searrow \frac{2 (n - 1)}{n + a - 2}$ as $i \to \infty$ such that $\lambda_{ {\rm as}, p_i } (K) \to \lambda \geq \lambda_{\rm as} (K)$. Since $K \in C^1 (\partial B_1)$ is positive, if follows from Corollary \ref{cor:compactembeddingS} that for $p_i > \frac{2 (n - 1)}{n + a - 2}$, $\lambda_{ {\rm as}, p_i }$ is achieved, say, by $v_i$. Since $|\mathcal{P}_a v_i| \leq \mathcal{P}_a |v_i|$, we can assume that $v_i$ is non-negative. Moreover,
$$
\|  v_i \|_{L^{p_i} (\partial B_1)}^{p_i} \leq \frac{1}{\min_{\partial B_1} K}.
$$
Then, by \eqref{eq:sharp} we have $\| \mathcal{P}_a v_i \|_{L^\frac{2 n}{n + a - 2} (B_1)} \leq C$ for some $C > 0$ independent of $i$. It is easy to see that $v_i$ satisfies the Euler-Lagrange equation
\begin{equation}\label{eq:equationviSp}
\lambda_{ {\rm as}, p_i } (K) K(\eta) v_i (\eta)^{p_i - 1} = \int_{B_1} P_a (\eta, \xi) \left[(\mathcal{P}_a v_i) (\xi)\right]^\frac{n - a + 2}{n + a - 2} d \xi ~~~~~ \forall ~ \eta \in \partial B_1.
\end{equation} 
By the regularity result in Theorem \ref{thm:Holder}, $v_i \in C^\beta (\partial B_1)$ for any $\beta \in (0, 1)$. 

Next we will show that $ v_i $ is uniformly bounded. Otherwise, we have  
$$
v_i (\eta_i) = \max_{\partial B_1} v_i \to \infty ~~~~~ \textmd{as} ~ i \to \infty.
$$ 
Let $\eta_i \to \bar{\eta}$ as $i \to \infty$. By the stereographic projection with $\eta_i$ as the south pole, equation \eqref{eq:equationviSp} is transformed to
$$
\lambda_{ {\rm as}, p_i } (K) K_i (y') u_i (y')^{p_i - 1} = \int_{\R_+^n} P_a (y', x) \left[(\mathcal{P}_a u_i) (x)\right]^\frac{n - a + 2}{n + a - 2} d x ~~~~~~ \forall ~ y' \in \R^{n - 1}, 
$$
where  
$$
K_i (y') = \bigg( \frac{ \sqrt{2} }{|y' + e_n|} \bigg)^{(n + a - 2) (p_i - 1) - n + a} K( F(y') )
$$
and 
$$ 
u_i (y') = \bigg( \frac{ \sqrt{2} }{|y' + e_n|} \bigg)^{n + a - 2} v_i ( F(y') ).
$$
Hence, $u_i (0) = \max_{ \R^{n - 1} } u_i \to \infty$ as $i \to \infty$. Taking $R_i = u_i(0)^{-\frac{1}{2} (p_i - \frac{2n}{n+a-2} ) } \to +\infty$ and using Theorem \ref{thm:blowup}, we obtain that after passing to a subsequence,  
$$
\phi_i (y') : = \frac{1}{u_i (0)} u_i \big( u_i (0)^{ p_i - \frac{2 n}{n + a - 2} } y' \big) \to \phi (y') ~~~~~~ \textmd{in} ~ C_{loc}^{1/2} (\R^{n - 1}),
$$
where $\phi > 0$ satisfies
$$
\lambda K(\bar{\eta}) \phi (y')^\frac{n - a}{n + a - 2} = \int_{\R_+^n} P_a (y', x) \left[(\mathcal{P}_a \phi) (x)\right]^\frac{n - a + 2}{n + a - 2} d x ~~~~~~ \textmd{for} ~ y' \in \R^{n - 1}.
$$
By Tang-Dou \cite{TD}, $\phi$ is classified.

Since $v_i$ is non-negative and antipodally symmetric, for any small $\delta > 0$ we have
$$
\aligned
1 & = \int_{\partial B_1} K v_i^{p_i} d s \\
& \geq 2 \int_{F(B_\delta')} K v_i^{p_i} d s \\
& = 2 \int_{B_\delta'} K_i u_i^{p_i} d z' \\
& = 2 u_i (0)^{n \big (p_i - \frac{2 (n - 1)}{n + a - 2} \big)} \int_{ B'_{ \delta u_i (0)^{\frac{2 n}{n + a - 2} - p_i} } } K_i \big( u_i (0)^{ p_i - \frac{2 n}{n + a - 2} } y' \big) \phi_i (y')^{p_i} d y' \\
& \geq 2 \int_{B_R'} K_i \big( u_i (0)^{ p_i - \frac{2 n}{n + a - 2} } y' \big) \phi_i (y')^{p_i} d y' \\
&\to 2 K(\bar{\eta}) \int_{B_R'} \phi (y')^\frac{2 (n - 1)}{n + a - 2} d y'
\endaligned
$$
as $i \to \infty$ for any fixed $R > 0$. It follows that
$$
1 \geq 2 K(\bar{\eta}) \int_{ \R^{n - 1} } \phi (y')^\frac{2 (n - 1)}{n + a - 2} d y'.
$$
Hence,
$$
\aligned
\mathcal{S}_{n, a}^\frac{2 n}{n + a - 2} & \geq \frac{ \int_{\R_+^n} |\mathcal{P}_a \phi|^\frac{2 n}{n + a - 2} }{ \big( \int_{ \R^{n - 1} } |\phi|^\frac{2 (n - 1)}{n + a - 2} \big)^\frac{n}{n - 1} } \\
& = \lambda K(\bar{\eta}) \bigg( \int_{ \R^{n - 1} } |\phi|^\frac{2 (n - 1)}{n + a - 2} \bigg)^{ - \frac{1}{n - 1} }\\ &\geq \lambda K(\bar{\eta})^\frac{n}{n - 1} 2^\frac{1}{n - 1}.
\endaligned
$$
It implies that
$$
\lambda \leq \frac{ \mathcal{S}_{n, a}^\frac{2 n}{n + a - 2} }{ (\min_{\partial B_1} K)^\frac{n}{n - 1} 2^\frac{1}{n - 1} },
$$
which contradicts the assumption \eqref{eq:lambdaKge}. Therefore, $\{ v_i \}$ is uniformly bounded on $\partial B_1$.   

By Theorem \ref{thm:Holder}, $\{ v_i \}$ is bounded in $C^{1/2} (\partial B_1)$. Thus, after passing to a subsequence, we have for some non-negative function $v \in C(\partial B_1)$, 
$$
\aligned
v_i  &\to v ~~~~~~ & \textmd{in}  ~ & ~ C(\partial B_1), \\ 
\endaligned
$$ 
and thus,
$$
\aligned
\mathcal{P}_a v_i  & \to  \mathcal{P}_a v & \textmd{in} ~ & ~  C(\overline B_1). 
\endaligned
$$ 
Letting $i \to \infty$ in \eqref{eq:equationviSp},  we obtain that $v$ satisfies
$$
\lambda K(\eta) v(\eta)^\frac{n - a}{n + a - 2} = \int_{B_1} P_a (\eta, \xi) \left[(\mathcal{P}_a v) (\xi)\right]^\frac{n - a + 2}{n + a - 2} d \xi.
$$
Moreover, since  
$$
1 =  \int_{\partial B_1} K(\eta) v_i (\eta)^{p_i} d\eta  \to  \int_{\partial B_1} K(\eta) v (\eta)^{\frac{2 (n - 1)}{n + a - 2}} d\eta, 
$$ 
we have $v > 0$ on $\partial B_1$. These also imply that $\lambda = \lambda_{\rm as}(K)$ and $\lambda_{\rm as} (K)$ is achieved. 
The proof of Proposition \ref{prop:lambdaK} is completed.
\end{proof}

\begin{proof}[Proof of Theorem \ref{thm:existence}] Let $v = 1$, then
$$
\lambda_{\rm as} (K) \geq \frac{ \int_{B_1} |\mathcal{P}_a 1|^\frac{2 n}{n + a - 2} d \xi }{ \big( \int_{\partial B_1} K d s \big)^\frac{n}{n - 1} } \geq \frac{ \mathcal{S}_{n, a}^\frac{2 n}{n + a - 2} }{ (\max_{\partial B_1} K)^\frac{n}{n - 1} } > \frac{ \mathcal{S}_{n, a}^\frac{2 n}{n + a - 2} }{ (\min_{\partial B_1} K)^\frac{n}{n - 1} 2^\frac{1}{n - 1} },
$$
where we use \eqref{eq:maxmin} in the last inequality. By Proposition \ref{prop:lambdaK}, we obtain the desired result.
\end{proof}

\begin{appendices}

\section{H\"older regularity}

This appendix is devoted to the proof of Theorem \ref{thm:Holder}. We start with the improvement of integrability of the subsolutions to some linear integral equations.
\begin{proposition}\label{prop:regularityone} Suppose that $n \geq 2$ and $2 - n < a < 1$. Let $1 < r, s \leq \infty$, $1 \leq t < \infty$, $\frac{n}{n - 1} < p < q < \infty$ satisfy
$$
\frac{1}{n} < \frac{t}{q} + \frac{1}{r} < \frac{t}{p} + \frac{1}{r} \leq 1
$$
and
$$
\frac{n}{t r} + \frac{n - 1}{s} = \frac{1}{t}.
$$
Assume that $U, V \in L^p (B_R^+)$, $W \in L^r (B_R^+)$, $f \in L^s (B_R')$ are all non-negative functions, $V \in L^q (B_{R/2}^+)$,
$$
\| W \|_{L^r (B_R^+)}^{1/t} \| f \|_{L^s (B_R')} \leq \varepsilon (n, a, p, q, r, s, t) ~ \mbox{small} 
$$
and
$$
U(x) \leq \int_{B_R'} P_a (y', x) f(y') \bigg( \int_{B_R^+} P_a (y', z) W(z) U(z)^t d z \bigg)^{1/t} d y' + V(x)
$$
for $x \in B_R^+$. Then $U \in L^q (B_{R/4}^+)$ and
$$
\| U \|_{ L^q (B_{R/4}^+) } \leq c(n, a, p, q, r, s, t)\Big( R^{ \frac{n}{q} - \frac{n}{p} } \| U \|_{ L^p (B_R^+) } + \| V \|_{ L^q (B_{R/2}^+) } \Big).
$$
\end{proposition}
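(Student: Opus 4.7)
The plan is a bootstrap argument combining the Hardy–Littlewood–Sobolev-type $L^p$--$L^q$ mapping properties of the Poisson-type extension $\mathcal{P}_a$ and its adjoint, together with a truncation step to ensure a priori finiteness. A direct scaling computation on the kernel $P_a(y',x)\sim x_n^{1-a}|x-(y',0)|^{-(n-a)}$ (for instance, by Fubini in $x_n$ to reduce to a Riesz-type potential on $\R^{n-1}$) yields the two HLS bounds
\[T:\ L^\alpha(\R^{n-1})\to L^\beta(\R_+^n),\ \tfrac{n-1}{\alpha}=\tfrac{n}{\beta};\qquad T^*:\ L^\gamma(\R_+^n)\to L^\delta(\R^{n-1}),\ \tfrac{n}{\gamma}-\tfrac{n-1}{\delta}=1,\]
valid in the admissible ranges $1<\alpha<\infty$ and $1<\gamma<n$, where $Th(x):=\int P_a(y',x)h(y')\,dy'$ and $T^*H(y'):=\int P_a(y',x)H(x)\,dx$.

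Setting $g(y'):=\bigl(\int_{B_R^+}P_a(y',z)W(z)U(z)^t\,dz\bigr)^{1/t}$, the hypothesis reads $U\le T(fg\chi_{B_R'})+V$ on $B_R^+$. I would chain four localized estimates: (i) the $L^{(n-1)q/n}\to L^q$ bound of $T$ gives $\|T(fg\chi_{B_R'})\|_{L^q(B_{R/4}^+)}\le C\|fg\|_{L^{(n-1)q/n}(B_R')}$; (ii) H\"older with $\tfrac{1}{s}+\tfrac{1}{A}=\tfrac{n}{(n-1)q}$ gives $\|fg\|_{L^{(n-1)q/n}}\le\|f\|_{L^s}\|g\|_{L^A}$; (iii) since $g^t=T^*(WU^t)$, the $L^\eta\to L^{A/t}$ bound of $T^*$ yields $\|g^t\|_{L^{A/t}}\le C\|WU^t\|_{L^\eta}$ with $\tfrac{n}{\eta}=1+\tfrac{(n-1)t}{A}$; (iv) H\"older with $\tfrac{1}{\eta}=\tfrac{1}{r}+\tfrac{t}{q}$ gives $\|WU^t\|_{L^\eta}\le\|W\|_{L^r}\|U\|_{L^q}^t$. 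Eliminating $\eta$ and $A$ from these four exponent identities recovers precisely the hypothesized $\tfrac{n}{tr}+\tfrac{n-1}{s}=\tfrac{1}{t}$, while the admissibility $\eta<n$ for the $T^*$ step translates exactly to the assumed $\tfrac{1}{n}<\tfrac{t}{q}+\tfrac{1}{r}$. Combining the chain yields
\[\|U\|_{L^q(B_{R/4}^+)} \le C\|f\|_{L^s(B_R')}\|W\|_{L^r(B_R^+)}^{1/t}\|U\|_{L^q(B_R^+)}+C\|V\|_{L^q(B_{R/2}^+)}+\text{(tail term)},\]
and the smallness assumption reduces the first coefficient to at most $\tfrac{1}{2}$, allowing absorption.

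To circumvent the circular a priori hypothesis $\|U\|_{L^q}<\infty$, I would rerun the chain for $U_M:=\min(U,M)$ by splitting the inner integral along $\{U\le M\}\cup\{U>M\}$. On $\{U\le M\}$ the nonlinear quantity $U^t$ is controlled by $U_M^t$, yielding the main term just described. On $\{U>M\}$ one applies the Chebyshev bound $|\{U>M\}|\le M^{-p}\|U\|_{L^p(B_R^+)}^p$ and the variant of step (iv) with $\tfrac{1}{\eta'}=\tfrac{1}{r}+\tfrac{t}{p}$ in place of $\tfrac{t}{q}$; here the hypothesis $\tfrac{t}{p}+\tfrac{1}{r}\le 1$ is used to keep $\eta'\ge 1$. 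A scaling argument then turns the resulting $L^p$-control into the stated $R^{n/q-n/p}\|U\|_{L^p(B_R^+)}$ tail. Sending $M\to\infty$ by monotone convergence concludes.

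The main technical obstacle is the careful passage from the global HLS mappings of $T$ and $T^*$ to the localized versions on half-balls: one has to split the Poisson kernel $P_a$ into near- and far-field pieces, apply global HLS to the near field, and bound the far-field contribution by pointwise kernel estimates combined with the crude $L^p$ bound on $U$. This is exactly what produces the explicit $R^{n/q-n/p}$ factor in the tail term. Once this localization is done cleanly, the remaining work is the exponent bookkeeping forced by the hypothesized identities.
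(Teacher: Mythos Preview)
The paper does not actually prove this proposition; it simply states that the proof is the same as that of \cite[Proposition~5.2]{HWY08}. Your outline is that argument: the four-step chain built from the $L^p$--$L^q$ bounds for $\mathcal{P}_a$ and its adjoint $\mathcal{T}_a$ (recorded in the paper as Propositions~\ref{prop:sharpwithp} and~\ref{prop:adjsharpwithp}) interlaced with H\"older is exactly the mechanism, and your exponent algebra is correct --- eliminating $A$ and $\eta$ from (ii)--(iv) recovers $\tfrac{n}{tr}+\tfrac{n-1}{s}=\tfrac{1}{t}$, and the admissibility $\eta<n$ for $T^*$ is precisely $\tfrac{1}{n}<\tfrac{t}{q}+\tfrac{1}{r}$. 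The truncation $U_M=\min(U,M)$ is also the standard device.

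One point needs sharpening. The absorption you write down has $\|U_M\|_{L^q(B_{R/4}^+)}$ on the left but $\|U_M\|_{L^q(B_R^+)}$ on the right; making the coefficient $\le\tfrac12$ does \emph{not} let you absorb across different domains, and replacing $U$ by $U_M$ only guarantees finiteness, not a bound uniform in $M$. What actually closes the loop is the localization you mention in your last paragraph, but used for a slightly different purpose than you indicate: splitting on concentric half-balls $B_\rho^+\subset B_{\rho'}^+\subset B_{R/2}^+$ yields an inequality of the shape
\[
\|U_M\|_{L^q(B_\rho^+)}\ \le\ \tfrac12\,\|U_M\|_{L^q(B_{\rho'}^+)}\ +\ C(\rho'-\rho)^{-(n/p-n/q)}\|U\|_{L^p(B_R^+)}\ +\ \|V\|_{L^q(B_{R/2}^+)},
\]
and a standard radius-iteration lemma then gives the stated estimate on $B_{R/4}^+$. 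In particular the term $R^{n/q-n/p}\|U\|_{L^p(B_R^+)}$ in the conclusion comes from this far-field splitting, not from the $\{U>M\}$ truncation tail --- that tail is controlled by $\|U\chi_{\{U>M\}}\|_{L^p}$ and vanishes as $M\to\infty$, so it cannot be the source of a surviving $L^p$ term. With this correction your plan matches the cited proof.
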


The proof of Proposition \ref{prop:regularityone} is the same as that of \cite[Proposition 5.2]{HWY08}. We also need the following two $L^p$-boundedness for the operator $\mathcal{P}_a$ and its adjoint operator.
\begin{proposition}[Chen \cite{Chen14}]\label{prop:sharpwithp} Suppose that $n \geq 2$ and $2 - n < a < 1$. For $1 < p \leq \infty$ we have
$$
\| \mathcal{P}_a f \|_{ L^\frac{n p}{n - 1} (\R_+^n) } \leq c(n, a, p) \| f \|_{ L^p (\R^{n - 1}) }
$$
for any $f \in L^p (\R^{n - 1})$.
\end{proposition}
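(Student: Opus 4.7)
The plan is to prove Proposition \ref{prop:sharpwithp} by real interpolation. The two endpoints I would establish are (i) the trivial strong bound $\mathcal{P}_a : L^\infty(\R^{n-1}) \to L^\infty(\R^n_+)$ with operator norm at most $1$, which is immediate from the normalization $\int_{\R^{n-1}} P_a(y', x) \, dy' = 1$; and (ii) the weak-type endpoint $\mathcal{P}_a : L^1(\R^{n-1}) \to L^{n/(n-1), \infty}(\R^n_+)$.

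To prove (ii), fix a nonnegative $f$ with $\|f\|_{L^1(\R^{n-1})} = 1$ and $\lambda > 0$, and set $E_\lambda = \{x \in \R^n_+ : (\mathcal{P}_a f)(x) > \lambda\}$. Two observations combine to control $|E_\lambda|$. First, maximizing the denominator of $P_a(y',x)$ at $y' = x'$ gives the pointwise bound
$$
P_a(y',x) \leq c_{n,a}\, x_n^{1-a}\cdot x_n^{-(n-a)} = c_{n,a}\, x_n^{1-n},
$$
which forces $(\mathcal{P}_a f)(x) \leq c_{n,a}\, x_n^{1-n}$, so that $E_\lambda \subseteq \R^{n-1} \times (0, T)$ with $T := (c_{n,a}/\lambda)^{1/(n-1)}$. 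Second, for each fixed $x_n > 0$ the slice $(\mathcal{P}_a f)(\cdot, x_n)$ equals the convolution on $\R^{n-1}$ of $f$ with the kernel $K_{x_n}(z') := c_{n,a}\, x_n^{1-a}(|z'|^2 + x_n^2)^{-(n-a)/2}$, which has $L^1(\R^{n-1})$-norm equal to one by the same normalization of $c_{n,a}$. Young's inequality together with Markov's inequality then yields the slice bound
$$
|\{x' \in \R^{n-1} : (\mathcal{P}_a f)(x', x_n) > \lambda\}| \leq \|K_{x_n} \ast f\|_{L^1(\R^{n-1})}/\lambda \leq 1/\lambda.
$$
Integrating in $x_n$ over $(0, T)$ gives $|E_\lambda| \leq T/\lambda \leq c(n,a)\,\lambda^{-n/(n-1)}$, which is endpoint (ii).

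With both endpoints in hand, Marcinkiewicz real interpolation on the $\sigma$-finite measure spaces $(\R^{n-1}, dy')$ and $(\R^n_+, dx)$ delivers the strong-type bound $\mathcal{P}_a : L^p(\R^{n-1}) \to L^{np/(n-1)}(\R^n_+)$ for every $p \in (1, \infty)$; the case $p = \infty$ is endpoint (i) itself. The main conceptual obstacle I would expect is recognizing that a direct slice-by-slice application of Young's convolution inequality with the conjugate exponent for the target $q = np/(n-1)$ produces a divergent $x_n$-integral: a short computation shows that the resulting $x_n$-integrand scales exactly like $x_n^{-1}$, so the naive Young approach fails precisely at the critical scaling and one genuinely needs the Marcinkiewicz route above (or an equivalent Lorentz-space refinement). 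The rest of the argument reduces to verifying standard normalizations and the scaling consistency $q_\theta = n p_\theta/(n-1)$ of the interpolated exponents.
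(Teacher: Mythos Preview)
The paper does not supply its own proof of this proposition; it is simply attributed to Chen \cite{Chen14} and stated without argument. So there is no proof in the paper to compare yours against.

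Your argument is correct and self-contained. The $L^\infty \to L^\infty$ endpoint is immediate from the normalization, and your weak-type $(1, n/(n-1))$ endpoint is clean: the pointwise kernel bound $P_a(y',x) \le c_{n,a}\,x_n^{1-n}$ confines the superlevel set to the slab $\{0 < x_n < T\}$ with $T = (c_{n,a}/\lambda)^{1/(n-1)}$, and the slice-wise $L^1$ convolution bound plus Chebyshev gives measure at most $1/\lambda$ on each horizontal slice, so integrating in $x_n$ yields $|E_\lambda| \le c(n,a)\,\lambda^{-n/(n-1)}$. Marcinkiewicz then interpolates to the strong-type bound for all $1 < p < \infty$, and the hypotheses $p_0 \le q_0$, $p_1 \le q_1$, $p \le q$ are all satisfied. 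One trivial wording slip: you wrote ``maximizing the denominator'' where you meant the kernel is maximized (equivalently, the denominator is minimized) at $y' = x'$; the displayed computation is correct regardless. Your closing remark that a naive Young-in-slices approach produces an $x_n^{-1}$ integrand and hence fails logarithmically is also accurate and a nice explanation of why interpolation is genuinely needed here.
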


For a function $F$ on $\R_+^n$, define
$$
(\mathcal{T}_a F) (y') = \int_{\R_+^n} P_a (y', x) F(x) d x.
$$
Then we have the following inequality by a duality argument. See also the similar proof in \cite[Proposition 2.3]{HWY08}. 
\begin{proposition}\label{prop:adjsharpwithp} Suppose that $n \geq 2$ and $2 - n < a < 1$. For $1 \leq p < n$ we have
$$
\| \mathcal{T}_a F \|_{ L^\frac{(n - 1) p}{n - p} (\R^{n - 1}) } \leq C(n, a, p) \| F \|_{ L^p (\R_+^n) }
$$
for any $F \in L^p (\R_+^n)$.
\end{proposition}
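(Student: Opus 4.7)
The plan is to prove Proposition \ref{prop:adjsharpwithp} by a straightforward duality argument, exploiting the fact that $\mathcal{T}_a$ is literally the formal adjoint of $\mathcal{P}_a$ with respect to the natural pairings on $\R^{n-1}$ and $\R^n_+$. Since Proposition \ref{prop:sharpwithp} already provides the mapping properties of $\mathcal{P}_a$ on the full Lebesgue scale $s \in (1, \infty]$, the task reduces to matching the Lebesgue exponents on the two sides correctly.

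Concretely, given $1 \leq p < n$, I would set
$$
s = \frac{(n-1)p}{n(p-1)} \in (1, \infty],
$$
interpreted as $s = \infty$ when $p = 1$. This choice is forced by the identity $ns/(n-1) = p'$, which is exactly what is needed so that Proposition \ref{prop:sharpwithp} maps $L^s(\R^{n-1})$ into the Hölder-dual space $L^{p'}(\R^n_+)$. For any nonnegative $F \in L^p(\R^n_+)$ and any $g \in L^s(\R^{n-1})$ with $\|g\|_{L^s} \leq 1$, Fubini's theorem gives
$$
\int_{\R^{n-1}} (\mathcal{T}_a F)(y')\, g(y')\, dy' = \int_{\R^n_+} F(x)\, (\mathcal{P}_a g)(x)\, dx,
$$
and then Hölder in the $x$-variable together with Proposition \ref{prop:sharpwithp} bounds the right-hand side by
$$
\|F\|_{L^p(\R^n_+)}\, \|\mathcal{P}_a g\|_{L^{p'}(\R^n_+)} \leq C(n,a,p)\, \|F\|_{L^p(\R^n_+)}.
$$
Taking the supremum over such $g$ and using the elementary identity $s' = (n-1)p/(n-p)$ yields the claim for nonnegative $F$, and the signed case follows by splitting into positive and negative parts.

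There is no substantive obstacle; the content of the argument is exactly the exponent arithmetic, namely $ns/(n-1) = p'$ and $s' = (n-1)p/(n-p)$, together with the $\sigma$-finite duality identity $\|h\|_{L^{s'}(\R^{n-1})} = \sup_{\|g\|_{L^s} \leq 1} \int_{\R^{n-1}} h g\, dy'$, which remains valid at the endpoint $s = \infty$, $s' = 1$. This closes the argument uniformly across the whole range $1 \leq p < n$, with the endpoint case $p = 1$ automatically reducing to the trivial bound $\mathcal{P}_a : L^\infty \to L^\infty$ that follows from $\int_{\R^{n-1}} P_a(y', x)\, dy' = 1$.
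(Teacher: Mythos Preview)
Your proposal is correct and follows exactly the duality argument the paper indicates (the paper gives no details beyond ``by a duality argument'' and a reference to \cite[Proposition 2.3]{HWY08}). The exponent arithmetic $ns/(n-1)=p'$ and $s'=(n-1)p/(n-p)$ is right, and your handling of the endpoint $p=1$ via $s=\infty$ and $\mathcal{P}_a:L^\infty\to L^\infty$ is appropriate.
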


Next we give the details of the proof of Theorem \ref{thm:Holder}.

\begin{proof}[Proof of Theorem \ref{thm:Holder}] Let $\tilde{u}_0 (y') = K(y') u(y')^{p - 1}$ and $U_0 (x) = (\mathcal{P}_a u) (x)$. Then
$$
\tilde{u}_0 (y') = \int_{\R_+^n} P_a (y', x) U_0 (x)^\frac{n - a + 2}{n + a - 2} d x.
$$
Define
$$
U_R (x) = \int_{ \R^{n - 1} \setminus B_R' } P_a (y', x) u(y') d y',
$$
$$
\tilde{u}_R (y') = \int_{\R_+^n \setminus B_R^+} P_a (y', x) U_0 (x)^\frac{n - a + 2}{n + a - 2} d x.
$$
Since $u \in L_{loc}^p (\R^{n - 1})$, by Proposition \ref{prop:sharpwithp} we get $\int_{B_R'} P_a (z', \cdot) u(z') d z' \in L^\frac{n p}{n - 1} (\R_+^n)$. Notice that
$$
\frac{n p}{n - 1} \geq \frac{2 n}{n + a - 2}.
$$

\vskip0.1in

{\bf Step 1.} We claim that $U_0 \in L_{loc}^\frac{n p}{n - 1} (\overline{\R_+^n})$ and $U_R \in L^\frac{n p}{n - 1} (B_R^+) \cap L_{loc}^\infty (B_R^+ \cup B_R')$.

\vskip0.1in

Since $u \in L_{loc}^p (\R^{n - 1})$, we have $u < \infty$ a.e. on $\R^{n - 1}$. It implies that $U_0 < \infty$ a.e. on $\R_+^n$. Hence, there exists $x_0 \in B_R^+$ such that $U_0 (x_0) < \infty$. It follows that
$$
\int_{ \R^{n - 1} \setminus B_R' } \frac{u(z')}{ (|x_0' - z'|^2 + x_{0, n}^2)^\frac{n - a}{2} } d z' < \infty.
$$
Thus,
$$
\int_{ \R^{n - 1} \setminus B_R' } \frac{u(z')}{ |z'|^{n - a} } d z' < \infty.
$$
For $0 < \theta < 1$ and $x \in B_{\theta R}^+$, we have
$$
U_R (x) = \int_{ \R^{n - 1} \setminus B_R' } P_a (z', x) u(z') d z' \leq \frac{ c_{n, a} R^{1 - a} }{ (1 - \theta)^{n - a} } \int_{ \R^{n - 1} \setminus B_R' } \frac{ u(z') }{ |z'|^{n - a} } d z'.
$$
It follows that $U_R \in L_{loc}^\infty (B_R^+ \cup B_R')$. Since $\int_{B_R'} P_a (z', \cdot) u(z') d z' \in L^\frac{n p}{n - 1} (\R_+^n)$, we know that $U_0 \in L_{loc}^\frac{n p}{n - 1} (B_R^+ \cup B_R')$. Since $R>0$ is arbitrary, we deduce that $U_0 \in L_{loc}^\frac{n p}{n - 1} (\overline{\R_+^n})$ and hence $U_R \in L^\frac{n p}{n - 1} (B_R^+)$.

\vskip0.1in

{\bf Step 2.} We show that $\tilde{u}_R \in L^\frac{p}{p - 1} (B_R') \cap L_{loc}^\infty (B_R')$.

\vskip0.1in

Since $\tilde{u}_0 \in L_{loc}^\frac{p}{p - 1} (\R^{n - 1})$, we obtain $\tilde{u}_0 \in L^\frac{p}{p - 1} (B_R')$ and thus $\tilde{u}_R \in L^\frac{p}{p - 1} (B_R')$. Hence, we can find $y_0' \in B_R'$ such that $\tilde{u}_R (y_0') < \infty$. That is, 
$$
\int_{\R_+^n \setminus B_R^+} \frac{ z_n^{1 - a} }{ (|z' - y_0'|^2 + z_n^2)^\frac{n - a}{2} } U_0 (z)^\frac{n - a + 2}{n + a - 2} d z < \infty.
$$
Therefore,
$$
\int_{\R_+^n \setminus B_R^+} \frac{ z_n^{1 - a} }{ |z|^{n - a} } U_0 (z)^\frac{n - a + 2}{n + a - 2} d z < \infty.
$$
For $0 < \theta < 1$ and $y' \in B_{\theta R}'$, we have
$$
\tilde{u}_R (y') = \int_{\R_+^n \setminus B_R^+} P_a (y', z) U_0 (z)^\frac{n - a + 2}{n + a - 2} d z \leq \frac{ c_{n, a} }{ (1 - \theta)^{n - a} } \int_{\R_+^n \setminus B_R^+} \frac{ z_n^{1 - a} }{ |z|^{n - a} } U_0 (z)^\frac{n - a + 2}{n + a - 2} d z.
$$
This implies that $\tilde{u}_R \in L_{loc}^\infty (B_R')$.

\vskip0.1in 

{\bf Step 3.} We prove that $\tilde{u}_0 \in L_{loc}^\infty (\R^{n - 1})$ and $U_0 \in L_{loc}^\infty (\overline{\R_+^n})$.

\vskip0.1in

{\it Case 1: } $\frac{2 (n - 1)}{n + a - 2} < p < \infty$. This is the subcritical case, and we directly use the bootstrap method to prove the regularity. 

\vskip0.1in

From Proposition \ref{prop:sharpwithp} and Step 1, we know that $U_0^\frac{n - a + 2}{n + a - 2} \in L_{loc}^{q_0} (\overline{\R_+^n})$ with 
$$
q_0 := \frac{n p}{n - 1} \cdot \frac{n + a - 2}{n - a + 2} > \frac{2 n}{n + a - 2} \cdot \frac{n + a - 2}{n - a + 2} = \frac{2 n}{n - a + 2} > 1.
$$
If $q_0 \geq  n$, by Proposition \ref{prop:adjsharpwithp} we know that $\int_{B_R^+} P_a (\cdot, z) U_0 (z)^\frac{n - a + 2}{n + a - 2} d z \in L^r (\R^{n - 1})$ for any $1 \leq r < \infty$. This together with Step 2 implies that $\tilde{u}_0 \in L_{loc}^r (B_R')$ for any $1 \leq r < \infty$. Since $R$ is arbitrary, we obtain $\tilde{u}_0 \in L_{loc}^r (\R^{n - 1})$ for any $1 \leq r < \infty$. Moreover, by Proposition \ref{prop:sharpwithp} we have $\int_{B_R'} P_a (z', \cdot) u(z') d z' \in L^s (\R_+^n)$ for any $\frac{n}{n - 1} < s < \infty$. Combined with Step 1, we also have $U_0 \in L_{loc}^s (\overline{\R_+^n})$ for any $1 \leq s < \infty$. If $q_0 < n$, then Proposition \ref{prop:adjsharpwithp} yields $\int_{B_R^+} P_a (\cdot, z) U_0 (z)^\frac{n - a + 2}{n + a - 2} d z \in L^\frac{(n - 1) q_0}{n - q_0} (\R^{n - 1})$. Combined with Step 2, we have $\tilde{u}_0 \in L_{loc}^\frac{(n - 1) q_0}{n - q_0} (B_R')$ for any $R>0$.  Consequently, we deduce that $u \in L_{loc}^{p_1} (\R^{n - 1})$ with
$$
p_1 := (p - 1) \cdot \frac{(n - 1) q_0}{n - q_0} = p \cdot \frac{ (p - 1) \frac{n + a - 2}{n - a + 2} }{ 1 - \frac{p (n + a - 2)}{(n - 1) (n - a + 2)} } > p, 
$$
where the last inequality holds since $p > \frac{2 (n - 1)}{n + a - 2}$. From now on, we denote the constant 
$$
\gamma:=\frac{ (p - 1) \frac{n + a - 2}{n - a + 2} }{ 1 - \frac{p (n + a - 2)}{(n - 1) (n - a + 2)} }>1.
$$
We can see that the regularity of $u$ is boosted to $L^{p_1}_{loc}(\R^{n - 1})$ with $p_1 =p \cdot  \gamma$. 

Using Proposition \ref{prop:sharpwithp} and Step 1 again, we obtain $U_0^\frac{n - a + 2}{n + a - 2} \in L_{loc}^{q_1} (\overline{\R_+^n})$ with 
$$
q_1 := \frac{n p_1}{n - 1} \cdot \frac{n + a - 2}{n - a + 2} = q_0 \cdot \gamma > q_0.
$$
If $q_1 \geq n$, then we easily obtain $U_0 \in L_{loc}^s (\overline{\R_+^n})$ for any $1 \leq s < \infty$. If $q_1 < n$, by a similar argument as above we can obtain that $u \in L_{loc}^{p_2} (\R^{n - 1})$ with
$$
p_2 := (p-1) \cdot \frac{(n-1)q_1}{n-q_1}=
p_1 \cdot \frac{ (p - 1) \frac{n + a - 2}{n - a + 2} }{ 1 - \frac{p_1 (n + a - 2)}{(n - 1) (n - a + 2)} } > p_1 \cdot \gamma 
$$
due to $p_1 > p$. Hence, the regularity of $u$ is boosted to $L^{p_2}_{loc}(\R^{n - 1})$ with $p_2 > p_1 \cdot  \gamma$. By Proposition \ref{prop:sharpwithp} and Step 1 again, we obtain $U_0^\frac{n - a + 2}{n + a - 2} \in L_{loc}^{q_2} (\overline{\R_+^n})$ with 
$$
q_2 := \frac{n p_2}{n - 1} \cdot \frac{n + a - 2}{n - a + 2} > q_1\cdot \gamma.
$$

Repeating this process with finite many steps, we can boost $U_0$ to $L_{loc}^q (\overline{\R_+^n})$ for any $1 \leq q < \infty$. By H\"{o}lder inequality we get 
$$
\tilde{u}_0 (y') = \int_{ B_R^+} P_a (y', z) U_0 (z)^\frac{n - a + 2}{n + a - 2} d z + \tilde{u}_R (y') \leq c(n, a, q) \| U_0 \|_{ L^q (B_R^+) }^\frac{n - a + 2}{n + a - 2} + \tilde{u}_R (y')
$$
for some $q > \frac{n(n - a + 2)}{n + a - 2}$. This together with Step 2 implies that $\tilde{u}_0 \in L_{loc}^\infty (B_R')$. Since $R$ is arbitrary, we have $\tilde{u}_0 \in L_{loc}^\infty (\R^{n - 1})$ and hence $u \in L_{loc}^\infty (\R^{n - 1})$. Combined with Step 1, we see $U_0 \in L_{loc}^\infty (\overline{\R_+^n})$.

\vskip0.1in 

{\it Case 2:} $p = \frac{2 (n - 1)}{n + a - 2}$. For this critical case, the bootstrap method above does not work. We will use Proposition \ref{prop:regularityone} to establish the regularity.  

\vskip0.1in

In this case, we have $U_0 \in L_{loc}^\frac{2 n}{n + a - 2} (\overline{\R_+^n})$ and $U_R \in L^\frac{2 n}{n + a - 2} (B_R^+) \cap L_{loc}^\infty (B_R^+ \cup B_R')$. Since $a < 1$, we get $0< \frac{n + a - 2}{n - a} < 1$. Then,
$$
\tilde{u}_0 (y')^\frac{n + a - 2}{n - a} \leq \bigg( \int_{B_R^+} P_a (y', z) U_0 (z)^\frac{n - a + 2}{n + a - 2} d z \bigg)^\frac{n + a - 2}{n - a} + \tilde{u}_R (y')^\frac{n + a - 2}{n - a}.
$$
Hence,
\begingroup
\allowdisplaybreaks
\begin{align*}
U_0 (x) = & \int_{B_R'} P_a (y', x) u (y') d y' + U_R (x) \\
= & \int_{B_R'} P_a (y', x) K(y')^{- \frac{n + a - 2}{n - a} } \tilde{u}_0 (y')^\frac{n + a - 2}{n - a} d y' + U_R (x) \\
\leq & \int_{B_R'} P_a (y', x) K(y')^{- \frac{n + a - 2}{n - a} } \bigg( \int_{B_R^+} P_a (y', z) U_0 (z)^\frac{2}{n + a - 2} U_0 (z)^\frac{n - a}{n + a - 2} d z \bigg)^\frac{n + a - 2}{n - a} d y' \\
& + V_R (x), 
\end{align*}
\endgroup
where
$$
V_R (x) = \int_{B_R'} P_a (y', x) K(y')^{ - \frac{n + a - 2}{n - a} } \tilde{u}_R (y')^\frac{n + a - 2}{n - a} d y' + U_R (x).
$$
Since $\tilde{u}_R \in L^\frac{2 (n - 1)}{n - a} (B_R')$, we have $V_R \in L^\frac{2 n}{n + a - 2} (B_R^+)$. 
On the other hand, for $0 < \theta < 1$, $x \in B_{\theta R}^+$, we have
$$
\aligned
& \int_{B_R'} P_a (y', x) K(y')^{- \frac{n + a - 2}{n - a} } \tilde{u}_R (y')^\frac{n + a - 2}{n - a} d y' \\
& \leq (\min\nolimits_{B_R'} K)^{- \frac{n + a - 2}{n - a} } \bigg[ \| \tilde{u}_R \|_{ L^\infty (B_{ \frac{1 + \theta}{2} R }) }^\frac{n + a - 2}{n - a} + \frac{c(n, a)}{ (1 - \theta)^{n - a} R^{n - 1} } \int_{ B_R' \setminus B_{ \frac{1 + \theta}{2} R } } \tilde{u}_R (y')^\frac{n + a - 2}{n - a} d y' \bigg] \\
& \leq (\min\nolimits_{B_R'} K)^{- \frac{n + a - 2}{n - a} } \bigg[ \| \tilde{u}_R \|_{ L^\infty (B_{ \frac{1 + \theta}{2} R }) }^\frac{n + a - 2}{n - a} + \frac{c(n, a)}{ (1 - \theta)^{n - a} R^\frac{n + a - 2}{2} } \| \tilde{u}_R \|_{ L^\frac{2 (n - 1)}{n - a} (B_R') }^\frac{n + a - 2}{n - a} \bigg].
\endaligned
$$
Hence, $V_R \in L_{loc}^\infty (B_R^+ \cup B_R')$. It follows from Proposition \ref{prop:regularityone} that $U_0 \in L^q (B_{R/4}^+)$ for any $\frac{2 n}{n + a - 2} < q < \infty$ when $R$ is sufficiently small. Therefore,
$$
\tilde{u}_0 (y') = \int_{ B_{R/4}^+ } P_a (y', z) U_0 (z)^\frac{n - a + 2}{n + a - 2} d z + \tilde{u}_{R/4} (y') \leq c(n, a, q) \| U_0 \|_{ L^q (B_{R/4}^+) }^\frac{n - a + 2}{n + a - 2} + \tilde{u}_{R/4} (x')
$$
for some $q > \frac{n(n - a + 2)}{n + a - 2}$. In particular, we see $\tilde{u}_0 \in L^\infty (B_{R/8}')$. Since every point can be viewed as a center, we get $\tilde{u}_0 \in L_{loc}^\infty (\R^{n - 1})$ and hence $U_0 \in L_{loc}^\infty (\overline{\R_+^n})$.

\vskip0.1in

{\bf Step 4.} We prove that $u \in C_{loc}^\beta (\R^{n - 1})$ for any $\beta \in (0, 1)$.

\vskip0.1in

From Step 1 and 2, we know that for any $R > 0$,
$$
\int_{ \R^{n - 1} \setminus B_R' } \frac{u(y')}{ |y'|^{n - a} } d y' < \infty ~~~~~~ \textmd{and} ~~~~~~ \int_{\R_+^n \setminus B_R^+} \frac{ x_n^{1 - a} }{ |x|^{n - a} } U_0 (x)^\frac{n - a + 2}{n + a - 2} d x < \infty.
$$
Therefore, $\tilde{u}_R \in C^\infty (B_R')$ and $U_R \in C^{1-a}(B_R^+ \cup B_R')$. It follows from Step 3 that $\tilde{u}_0 \in C_{loc}^\beta (\R^{n - 1})$ for any $0 < \beta < 1$. By the continuity, $\tilde{u}_0 > 0$ in $\R^{n - 1}$. Consequently, $u \in C_{loc}^\beta (\R^{n - 1})$ for any $0 < \beta < 1$ since $K$ is a positive $C^1$ function in $\R^n$.
\end{proof}

\end{appendices}

\bigskip

\noindent X. Du

\noindent School of Mathematical Sciences, Laboratory of Mathematics and Complex Systems, MOE, Beijing Normal University\\
Beijing 100875, China\\[1mm]
Email: \textsf{xduah@bnu.edu.cn}

\medskip

\noindent T. Jin

\noindent Department of Mathematics, The Hong Kong University of Science and Technology\\
Clear Water Bay, Kowloon, Hong Kong\\[1mm]
Email:  \textsf{tianlingjin@ust.hk}

\medskip

\noindent H. Yang

\noindent School of Mathematical Sciences, Shanghai Jiao Tong University\\
Shanghai 200240, China\\[1mm]
Email: \textsf{hui-yang@sjtu.edu.cn}


\begin{thebibliography}{100}

\bibitem{CS07} L. Caffarelli and L. Silvestre, An extension problem related to the fractional Laplacian. {\it Comm. Partial Differential Equations} {\bf 32} (2007), no. 7-9, 1245-1260.

\bibitem{C} T. Carleman, Zur Theorie der Minimalfl\"{a}chen. {\it Math. Z.} {\bf 9} (1921), 154-160.

\bibitem{Chen14} S. Chen, A new family of sharp conformally invariant integral inequalities. {\it Int. Math. Res. Not. IMRN} (2014), no. 5, 1205-1220.

\bibitem{CJR19} X. Chen, T. Jin and Y. Ruan, An existence theorem on the isoperimetric ratio over scalar-flat conformal classes. {\it J. Differential Equations} {\bf 269} (2020), no. 5, 4116-4136.

\bibitem{DGZ} J. Dou, Q. Guo and M. Zhu, Subcritical approach to sharp Hardy-Littlewood-Sobolev type inequalities on the upper half space. {\it Adv. Math.} {\bf 312} (2017), 1-45.  

\bibitem{ES} J. F. Escobar and R. Schoen, Conformal metrics with prescribed scalar curvature. {\it Invent. Math.}  {\bf 86} (1986), no. 2, 243-254. 

\bibitem{G} M. Gluck, Subcritical approach to conformally invariant extension operators on the upper half space. {\it J. Funct. Anal.} {\bf 278} (2020), no. 1, 108082, 46 pp. 

\bibitem{GZ} M. Gluck and M. Zhu, An extension operator on bounded domains and applications. {\it Calc. Var. Partial Differential Equations} \textbf{58} (2019), no. 2, Paper No. 79. 27 pp.

\bibitem{HWY08} F. Hang, X. Wang and X. Yan, Sharp integral inequalities for harmonic functions. {\it Comm. Pure Appl. Math.} {\bf 61} (2008), 0054-0095. 

\bibitem{HWY09} F. Hang, X. Wang and X. Yan, An integral equation in conformal geometry. {\it Ann. Inst. H. Poincar\'e C Anal. Non Lin\'eaire}  {\bf 26} (2009), no. 1, 1-21. 

\bibitem{JLX15} T. Jin, Y.Y. Li and J. Xiong, On a fractional Nirenberg problem, Part II: Existence of solutions. {\it Int. Math. Res. Not. IMRN} 2015, no. 6, 1555-1589.

\bibitem{JLX17} T. Jin, Y.Y. Li and J. Xiong, The Nirenberg problem and its generalizations: a unified approach. {\it Math. Ann.} {\bf 369} (2017), no. 1-2, 109-151.

\bibitem{JX18} T. Jin and J. Xiong, On the isoperimetric quotient over scalar-flat conformal classes. {\it Comm. Partial Differential Equations} {\bf 43} (2018), no. 12, 1737-1760.  

\bibitem{Moser} J. Moser,  ``On a Nonlinear Problem in Differential Geometry." {\it Dynamical Systems (Proc. Sympos., Univ. Bahia, Salvador, 1971)}, pp. 273--80. New York: Academic Press, 1973.

\bibitem{Robert} F. Robert, 
Positive solutions for a fourth order equation invariant under isometries.
{\it Proc. Amer. Math. Soc.} {\bf 131} (2003), no. 5, 1423--1431.

\bibitem{TD} S. Tang and J. Dou, Classification of positive solutions to an integral system with the poly-harmonic extension operator. {\it Sci. China Math.} {\bf 61} (2018), no. 9, 1603-1616. 

\bibitem{WZ20} L. Wang and M. Zhu, Liouville theorems on the upper half space. {\it Discrete Contin. Dyn. Syst.} {\bf 40} (2020), no. 9, 5373-5381.


\bibitem{X} J. Xiong, On a conformally invariant integral equation involving Poisson kernel. {\it Acta Math. Sin.} (Engl. Ser.) {\bf 34} (2018), no. 4, 681-690.

\bibitem{Yang} R. Yang, On higher order extensions for the fractional Laplacian. {\it arXiv:1302.4413}.

\end{thebibliography}
\end{document}